\newcommand{\R}{\mathbb{R}}  
\newcommand{\Z}{\mathbb{Z}}
\newcommand{\N}{\mathbb{N}}
\newcommand{\ol}[1]{\overline{#1}}
\newcommand{\la}{\lambda}
\renewcommand{\angle}{\measuredangle}
\newtheorem{ppn}{Proposition}
\newtheorem{cor}{Corollary}
\newtheorem{thm}{Theorem}
\newtheorem*{rmk*}{Remark}
\newtheorem{lma}{Lemma}
\newtheorem*{plm}{Problem}
\date{}
\title{Growth competitions on spherically symmetric Riemannian manifolds}
\author{Rotem Assouline}
\begin{document}

\normalem
\maketitle
\begin{abstract}
    We propose a model for a growth competition between two subsets of a Riemannian manifold. The sets grow at two different rates, avoiding each other. It is shown that if the competition takes place on a surface which is rotationally symmetric about the starting point of the slower set, then if the surface is conformally equivalent to the Euclidean plane, the slower set remains in a bounded region, while if the surface is nonpositively curved and conformally equivalent to the hyperbolic plane, both sets may keep growing indefinitely.

\end{abstract}

\section{Introduction}

Consider two subsets $A,B$ of, say, the Euclidean plane, which evolve over time, $A=A_t,B=B_t$, according to the following simple rules: both sets begin as  singletons, $A_0=\{q\},B_0=\{p\}$ for some points $p\ne q$, and expand at rates $\la>1$ and 1 respectively, without intersecting each other. If a point belongs to one of the sets at a certain time, then it remains in the set forever.  What will the sets $A_\infty:=\bigcup_tA_t$ and $B_\infty:=\bigcup_tB_t$ look like?
    
This \textit{growth competition} can take place on essentially any metric space. Its precise formulation is given in the following section, and its existence and uniquness on complete Riemannian manifolds is established. We then study growth competitions on Riemannian manifolds which are spherically symmetric about the point $p$. The cases of the Euclidean plane and the hyperbolic plane exhibit contrasting behaviors: on the Euclidean plane, the faster set $A$ will eventually trap the set $B$ in a bounded region, while on the hyperbolic plane, if the two sets begin sufficiently far apart, then there is \textit{coexistence}, i.e., both sets keep expanding forever. In fact, we show:
\begin{thm}\label{mainthm}
    Let $M$ be a two-dimensional, complete, non-compact Riemannian manifold, which is rotationally symmetric about $p\in M$. Let $q\in M$ and $\la>1$, and let $\{A_t,B_t\}$ be the corresponding growth competition. Then
    \begin{enumerate}
        \item If $M$ is parabolic, then $B_\infty$ is bounded.
        \item If $M$ is hyperbolic and nonpositively curved, then  $B_\infty$ is unbounded if $d(p,q)$ is sufficiently large.
    \end{enumerate}
\end{thm}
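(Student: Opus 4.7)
I would use the rotational symmetry to reduce to a one-dimensional dynamical problem and then match cases (1) and (2) to the conformal-type dichotomy $\int^\infty dr/f(r)=\infty$ versus $\int^\infty dr/f(r)<\infty$, where the metric in geodesic polar coordinates around $p$ has the warped form $dr^2+f(r)^2\,d\theta^2$. Place $q=(\gamma,0)$ with $\gamma=d(p,q)$; reflection $\theta\mapsto -\theta$ is an isometry preserving the initial configuration, so $A_t$ and $B_t$ are axially symmetric. Boundedness of $B_\infty$ is then equivalent to $A$ eventually encircling $B$, which by symmetry is equivalent to the upper and lower wrap-around fronts of $A$ meeting on the back ray $\theta=\pi$ at some finite time. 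For $t$ greater than the first contact time $t_0:=\gamma/(\la+1)$, let $\beta(t)\in\partial A_t\cap\partial B_t$ be the upper triple junction, and write $\beta(t)=(t,\Theta(t))$---the relation $d(p,\beta(t))=t$ holding because the radial $B$-geodesic from $p$ is not obstructed before it reaches $\beta(t)$.

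The core of the argument is a two-sided estimate
\[
\frac{c\la}{f(t)}\;\le\;\frac{d\Theta}{dt}\;\le\;\frac{C\la}{f(t)}
\]
with constants $c,C>0$ depending only on $\la$. Intuitively, $A$'s front, pressed against $\partial B_t=\{d(p,\cdot)=t\}$, advances along this circle at arc-length rate $\la$ and hence at angular rate $\la/f(t)$. A first-variation computation at $\beta(t)$ should give the cleaner identity
\[
\frac{d\Theta}{du}\;=\;\frac{\la-\cos\psi}{\sin\psi},\qquad u(t)=\int_{t_0}^{t}\frac{ds}{f(s)},
\]
where $\psi$ is the angle at $\beta(t)$ between the outward radial from $p$ and the direction to $q$ along $A$'s incoming geodesic. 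The lower bound is immediate from $(\la-\cos\psi)/\sin\psi\ge\la-1>0$; the upper bound requires $\sin\psi$ to stay bounded away from zero, and this is where nonpositive curvature will be used.

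For (1), $u(t)\to\infty$, so the lower bound forces $\Theta(t)\nearrow\pi$ at a finite $t^\ast$; at that moment $\beta_\pm$ collide on the back axis, $A$ encloses $B$, and $B_\infty\subset B_{t^\ast}$ is bounded. For (2), the integral $\int^\infty ds/f(s)$ is finite, and the upper bound gives
\[
\Theta(\infty)\;\le\;C\la\int_{t_0}^{\infty}\frac{ds}{f(s)}\;\xrightarrow[\gamma\to\infty]{}\;0,
\]
since $t_0\to\infty$. Choosing $\gamma$ so large that the right-hand side is less than $\pi$ prevents the wrap-around from ever closing, so a positive-width sector around the back ray $\theta=\pi$ remains free of $A$, and $B$'s radial geodesics into this sector extend unobstructed to infinity; thus $B_\infty$ is unbounded.

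The hard part will be making the dynamics of $\beta(t)$ and the two-sided bound on $d\Theta/dt$ rigorous. In particular, one must verify that $A$'s arrival at $\beta(t)$ is captured by the first-variation identity rather than by a more complicated obstructed-geodesic condition, which requires controlling when $A$'s geodesic from $q$ to $\beta(t)$ is blocked by the growing $B$. Nonpositive curvature enters (only) here, via a CAT$(0)$ or Toponogov-type comparison bounding the triangle angles at $\beta(t)$ and keeping $\sin\psi$ uniformly positive; without such a bound a rotationally symmetric surface with $\int^\infty dr/f(r)<\infty$ but concentrated positive curvature near $p$ could give $A$ an anomalously fast wrap-around and defeat the threshold in $\gamma$.
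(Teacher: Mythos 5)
Your outline correctly identifies the conformal dichotomy $\int^\infty dr/G(r) = \infty$ versus $<\infty$ and the role of nonpositive curvature, but the strategy differs from the paper's, and the two-sided bound at its core is not correct as stated.

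For part (1) the paper does not analyze the triple-junction dynamics at all: it directly exhibits, for every direction $\theta$, an explicit $\lambda$-Lipschitz path $\gamma(t)=(\ell+t,\eta(\alpha(t)))$ with $\alpha(t)=\sqrt{\lambda^2-1}\int_0^t G(\ell+s)^{-1}\,ds$, which trivially avoids $B$ because $d(\gamma(t),p)=\ell+t>t$, and wraps to angle $\pi$ in finite time precisely because $I=\infty$. This requires no geodesic analysis, no notion of $\psi$, and no curvature hypothesis. Your lower bound, by contrast, rests on the unproven first-variation identity $d\Theta/du=(\lambda-\cos\psi)/\sin\psi$, which already presupposes a structure theorem for the interface (that the $A$-side path to $\beta(t)$ is a geodesic meeting $\partial B_t$ transversally at a well-defined angle), and your plan invokes CAT$(0)$ comparisons to make it rigorous --- but part (1) of the theorem does not assume nonpositive curvature. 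So your route for (1) is both harder and requires hypotheses the theorem does not grant.

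For part (2) there is a genuine gap. Your claimed uniform bound $d\Theta/dt\le C\lambda/f(t)$ cannot hold with $C$ independent of $\ell$, because $\psi\to 0$ as $t\to t_0^+$: at the first contact point $\beta(t_0)$, which lies on the geodesic from $p$ to $q$, the outward radial from $p$ and the incoming direction from $q$ are parallel, so $\sin\psi=0$ and $(\lambda-\cos\psi)/\sin\psi\to\infty$. Consequently your estimate $\Theta(\infty)\le C\lambda\int_{t_0}^{\infty}ds/f(s)\to 0$ is not justified --- indeed, the initial ``visible'' arc of the interface contributes an angle $a>0$ that need not tend to zero with $\ell$ (in the Euclidean case it is exactly $\pi/2$ for every $\ell$). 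The paper circumvents this by splitting the minimal $d_\Omega$-path from $q$ to a point of $\partial\Omega$ into a geodesic segment up to the tangency point on $\partial\Omega_1$, followed by a boundary-hugging arc. On the geodesic segment the angle at $p$ is bounded by $\pi-\arccos(\lambda^{-1})$ via the tangency condition $\angle(\dot\gamma,\partial_r)=\arccos(\lambda^{-1})$ and the angle-sum bound in nonpositively curved triangles (Lemma~\ref{visibilitylma}). On the boundary-hugging arc one has exactly $d(\gamma(t),p)=t$ and $|\dot\eta|=\sqrt{\lambda^2-1}/G$, giving the additional angle $\sqrt{\lambda^2-1}\int_{r_0}^\infty dr/G(r)$ with $r_0\ge \ell/(\lambda+1)$; only this tail is controlled by the integral. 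Summing the two contributions yields $<\pi$ for $\ell$ large. You would need some analogue of this two-regime decomposition to repair your argument; a single uniform $O(1/f)$ bound on $d\Theta/dt$ is not available.
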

 
Here \textit{parabolic} (resp. \textit{hyperbolic}) stands for ``conformally equivalent to the Euclidean (resp. hyperbolic) plane". 
    
The problem of determining the shapes of the competing sets was suggested by Itai Benjamini \cite{itaisurvival}, and is loosely inspired by probabilistic competitions on $\Z^d$ and other graphs, see  \cite{deijfen2015pleasures}, \cite{haggstrom_pemantle_1998}. A related problem, concerning a strategy to control a forest fire on the Euclidean plane, was introduced by Bressan, see \cite{BRESSAN2007179}, \cite{BressanDelellis}, \cite{BressanWang}. In \cite{Hamobs}, Bressan's fire confinement problem is treated using the apparatus of viscosity solutions to the Hamilton-Jacobi equation. This could be the appropriate framework for dealing with problems such as the one introduced here. However, as we are interested mostly in the relationship between coexistence and the underlying geometry, we have managed to define the competition in a way which enables an elementary proof of existence and uniqueness of the solution, while still capturing the essence of the problem.
    
Given the connection presented here between the conformal type of a simply-connected surface, and the outcome of growth competitions on it, it is now natural to ask:
\begin{plm}
    Prove or disprove each of the following statements.
    \begin{enumerate}
        \item On a hyperbolic surface, for every $\la>1$ there is some choice of $p,q$ such that $B_\infty$ is unbounded.
        \item On a parabolic surface, for every $\la>1$ and every $L>0$, there is some choice of $p,q$ such that $d(p,q)>L$ and $B_\infty$ is bounded.
        \end{enumerate}
\end{plm}
    
\textbf{Acknowledgements:} I would like to express my gratitude to Itai Benjamini for offering me this problem and assisting in writing the paper, and to Bo'az Klartag for pointing me to the paper \cite{milnor}, as well as reviewing the proof thoroughly and suggesting improvements.
    
Partially supported by a grant from the Israel Science Foundation (ISF).
    
\begin{figure}[t]
    \centering
    \begin{subfigure}{.53\textwidth}
        \centering
        \includegraphics[width=.95\textwidth]{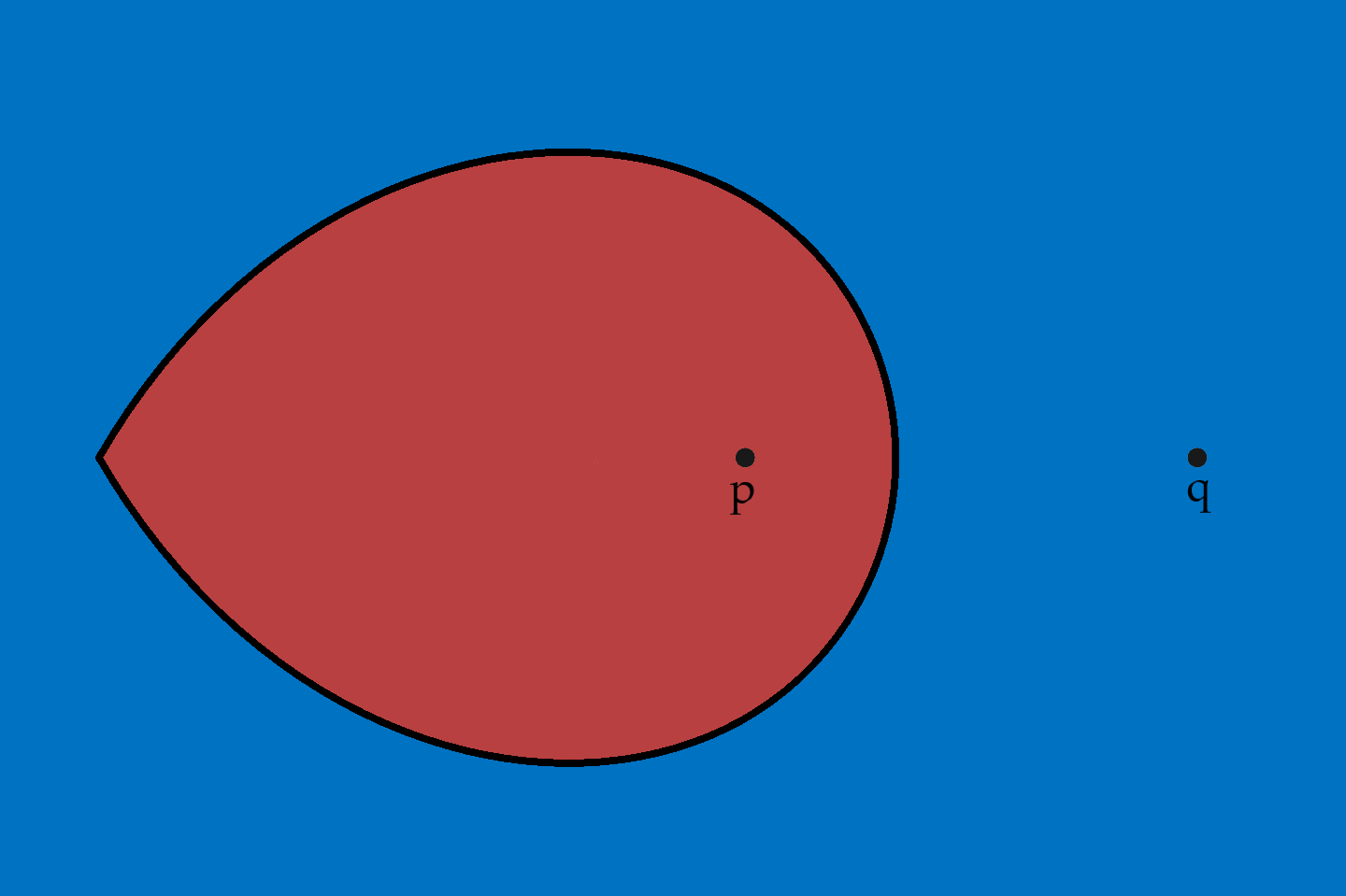} 
        \caption{}
    \end{subfigure}\hfill
    \begin{subfigure}{0.47\textwidth}
        \centering
        \includegraphics[width=.95\textwidth]{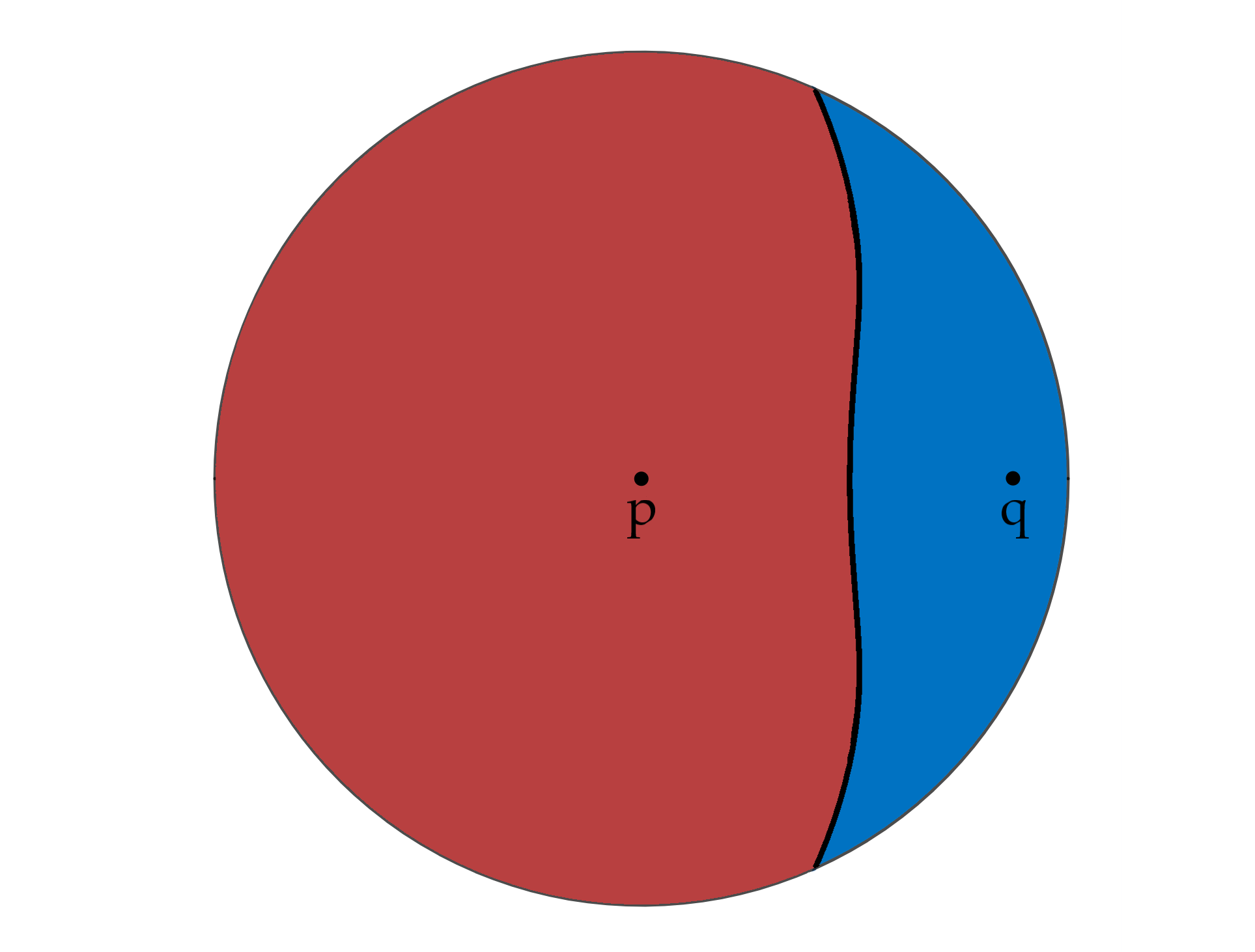} 
        \caption{}
    \end{subfigure}
    \caption{The sets $A_\infty$ (blue) and  $B_\infty$ (red) for growth competitions     on (a) the Euclidean plane (b) the hyperbolic plane.}
    \label{ABfig}
\end{figure}

\section{Growth competitions}\label{growths}
Let $X$ be a metric space, let $p\ne q\in X$, and let $\lambda>1$. Let $\{A_t,B_t\}_{t\ge 0}$ be two families of subsets of $X$. Say that a path $\gamma:[0,T]\to X$ \textit{avoids $A$} (resp. \textit{avoids $B$}) if
\begin{itemize}
    \item  $\gamma(0)=p$ (resp. $\gamma(0)=q$),
    \item $\gamma$ is 1-Lipschitz (resp. $\la$-Lipschitz)
    \item $\gamma(s)\notin A_s$ for all $s\in[0,T]$ (resp. $\gamma(s)\notin B_s$ for all $s\in[0,T]$).
\end{itemize}
The pair $A_t,B_t$ will be called a \textit{ growth competition between $p$ and $q$}, if 
\begin{itemize}
    \item $A_t,B_t$ are increasing in $t$, i.e., $A_{t_1}\subseteq A_{t_2}$ and $B_{t_1}\subseteq B_{t_2}$ for all $0\le t_1\le t_2$, 
    \item $A_0=\{q\}, B_0=\{p\}$, and for all $t> 0$:
    \begin{equation}\label{ABconditions}
        A_t=\bigcup_\gamma\gamma([0,t]) \qquad \text{and} \qquad B_t=\bigcup_\eta\eta([0,t))
    \end{equation}
    where the union on the left is over paths avoiding $B$, and the union on the right is over paths avoiding $A$.
\end{itemize}
Denote also
\begin{equation}
    A_\infty=\bigcup_{t\ge 0}A_t \qquad \text{and} \qquad B_\infty=\bigcup_{t\ge 0}B_t.
\end{equation} 
In the rest of this section, we prove existence and uniqueness of growth competitions on complete Riemannian manifolds. Fix a manifold $(M,g)$, two points $p\ne q\in M$, and $\la>1$.

The following lemma is quite evident from the definitions.
\begin{lma}\label{ABdisjoint}
    Let $\{A_t,B_t\}$ be a growth competition between $p$ and $q$. Then $A_t\cap B_{t'}=\varnothing$ for all $t,t'\in[0,\infty]$.
\end{lma}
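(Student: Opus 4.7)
The plan is to argue by contradiction. Suppose $x \in A_t \cap B_{t'}$; passing to finite values using $A_\infty = \bigcup_s A_s$ and $B_\infty = \bigcup_s B_s$, and disposing of the trivial boundary cases $t=0$ or $t'=0$ (where $A_0=\{q\}$ and $B_0=\{p\}$ are explicit singletons with $p \ne q$), I may assume $0 < t, t' < \infty$. First I would extract witnessing paths from \eqref{ABconditions}: a $\la$-Lipschitz path $\gamma$ starting at $q$, with $\gamma(T_1)=x$ for some $T_1 \le t$ and $\gamma(s) \notin B_s$ for all $s\in[0,T_1]$; and a $1$-Lipschitz path $\eta$ starting at $p$, with $\eta(T_2)=x$ for some $T_2 < t'$ (the strict inequality being forced by the half-open interval $[0,t')$ in the definition of $B_{t'}$) and $\eta(s) \notin A_s$ for all $s\in[0,T_2]$. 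Evaluating the avoidance conditions at $s=T_1$ and $s=T_2$ gives $x \notin B_{T_1}$ and $x \notin A_{T_2}$.

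Next I would observe that these same paths, restricted to the sub-intervals $[0,T_1]$ and $[0,T_2]$, are themselves admissible in the corresponding unions and therefore witness positive membership: $\gamma|_{[0,T_1]}$ shows $x \in A_{T_1}$, while $\eta|_{[0,T_2]}$ shows $x \in B_\tau$ for every $\tau > T_2$. The lemma then follows from a two-case analysis on the order of $T_1$ and $T_2$. If $T_1 \le T_2$, monotonicity of $(A_s)$ gives $x \in A_{T_1} \subseteq A_{T_2}$, contradicting $x \notin A_{T_2}$. If $T_1 > T_2$, taking $\tau = T_1$ yields $x \in B_{T_1}$, contradicting $x \notin B_{T_1}$.

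As the text itself hints, I do not expect any genuine obstacle. The only point meriting care is the deliberate asymmetry in \eqref{ABconditions} between the closed $[0,t]$ in the definition of $A_t$ and the half-open $[0,t')$ in the definition of $B_{t'}$; this is precisely what lets the first case above absorb the borderline $T_1 = T_2$, so that at a tie the faster set $A$ wins.
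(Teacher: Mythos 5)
Your proof is correct and follows essentially the same route as the paper's: extract witnessing paths $\gamma$ (for $A$) and $\eta$ (for $B$) reaching the hypothetical common point $x$ at times $T_1$ and $T_2$, note that restrictions of these paths establish $x\in A_{T_1}$ and $x\in B_\tau$ for $\tau>T_2$ while the avoidance conditions give $x\notin B_{T_1}$ and $x\notin A_{T_2}$, and split on the order of $T_1,T_2$. The paper phrases this directly (fix $x\in A_t$ and show every $\eta$ avoiding $A$ misses $x$) rather than by contradiction, and its $t_0$ and $s$ play the roles of your $T_1$ and $T_2$, but the two case analyses and the use of monotonicity are identical; the only cosmetic caveat is that your dismissal of the $t=0$ or $t'=0$ cases is a touch glib (e.g.\ $p\notin A_t$ needs monotonicity of $B_s$ plus the avoidance condition, not merely $p\ne q$), though these cases are indeed trivial.
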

\begin{proof}
    Let $0< t<\infty$ and let $x\in A_t$. There exists a path $\gamma:[0,t]\to M$ avoiding $B$ such that $x=\gamma(t_0)$ for some $0\le t_0\le t$, and since $\gamma\vert_{[0,t_0]}$ is also a curve avoiding $B$, $x\in A_{t_0}$. Let $\eta:[0,t')\to M$ be a curve avoiding $A$. We claim that $\eta(s)\ne x$ for all $s\in[0,t')$. Indeed,
    \begin{itemize}
        \item[--] If $s<t_0$ then $\eta(s)\in B_{t_0}$, so $\eta(s) \ne x = \gamma(t_0)$ because $\gamma$ avoids $B$.
        \item[--] If $s\ge t_0$ then $x = \gamma(t_0)\in A_{t_0}\subseteq A_s$, so $\eta(s) \ne x$ since $\eta$ avoids $A$. 
    \end{itemize}
    Thus $\eta(s)\ne x$ for all $s\in[0,t')$, and since $\eta$ is an arbitrary curve avoiding $A$, $x\notin B_{t'}$. This proves that $A_t,B_{t'}$ are disjoint for all $t,t'\ge 0$, and it follows that $A_\infty$ and $B_\infty$ are disjoint.
\end{proof}

Denote by $d$ the distance function of $(M,g)$. For $x\in M$ and $R>0$, denote by $\mathcal{B}(x,R)=\{y\in M \mid d(x,y)<R\}$ the open ball of radius $R$ centered at $x$. Another fact which follows trivially from the definitions is the following.
\begin{lma}
    $A_t\subseteq \overline{\mathcal{B}(q,\la t)}$ and $B_t\subseteq \mathcal{B}(p,t)$ for all $t\ge  0$.
\end{lma}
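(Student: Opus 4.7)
The plan is to unpack the definitions of $A_t$ and $B_t$ and apply the Lipschitz property of the avoiding paths directly. Both inclusions follow from essentially the same one-line argument, with the only subtlety being the open/closed distinction in the ball.

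For the first inclusion, suppose $x\in A_t$. By definition of $A_t$ as a union over paths avoiding $B$, there exists a path $\gamma:[0,T]\to M$ avoiding $B$ with $x=\gamma(t_0)$ for some $t_0\in[0,t]$. By the definition of ``avoiding $B$'', $\gamma$ satisfies $\gamma(0)=q$ and is $\lambda$-Lipschitz. Thus
\[
d(q,x)=d(\gamma(0),\gamma(t_0))\le \lambda t_0\le \lambda t,
\]
so $x\in\overline{\mathcal{B}(q,\lambda t)}$. For the second inclusion, suppose $x\in B_t$; then $x=\eta(s)$ for some path $\eta$ avoiding $A$ and some $s\in[0,t)$. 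Since $\eta(0)=p$ and $\eta$ is $1$-Lipschitz,
\[
d(p,x)=d(\eta(0),\eta(s))\le s<t,
\]
so $x\in\mathcal{B}(p,t)$.

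There is no real obstacle here; the only thing worth flagging is why one inclusion gives a closed ball and the other an open ball. This is a direct consequence of the asymmetry in equation \eqref{ABconditions}: the union defining $A_t$ is over $\gamma([0,t])$ (giving the non-strict inequality $t_0\le t$), whereas the union defining $B_t$ is over $\eta([0,t))$ (giving the strict inequality $s<t$).
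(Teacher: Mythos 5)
Correct. The paper states this lemma without proof (``follows trivially from the definitions''), and your argument is exactly the direct unpacking one would write down; the remark about why the open/closed distinction tracks the closed interval $[0,t]$ versus the half-open $[0,t)$ in \eqref{ABconditions} is accurate.
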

We shall now construct a certain subset of $M$ and prove that it must coincide with $B_\infty$. This will leave us with a unique candidate for a growth competition.\\
For a subset $S\subseteq M$, denote by $d_S$ the \textit{intrinsic metric of $M\setminus S$}, which is the metric on $M\setminus S$ defined by
\begin{equation*}
    d_S(x,y)=\inf\{\mathrm{Length}(\gamma) \mid \gamma:[0,T]\to M\setminus S \text{ is a Lipschitz path}, \  \gamma(0)=x, \gamma(T)=y\}.
\end{equation*}
If the set $S$ is open, then by the Arzelà–Ascoli theorem, the metric space $(M\setminus S,d_S)$ is a geodesic metric space, i.e., for each $x,y\in M\setminus S$ there exists a path $\gamma$ realizing their intrinsic distance. If $M\setminus S$ is not path-connected, then $d_S$ might attain the value $\infty$. 
    
Define a sequence $\Omega_n\subseteq M$ for $n\ge 0$ by recursion as follows. Set $\Omega_0:=\varnothing$. Having defined $\Omega_0,\dots,\Omega_n$, set
\begin{equation}
    \Omega_{n+1}=\Omega_n\cup\{x\in M\setminus\Omega_n \mid d_{\Omega_n}(x,q)> \la d(x,p)\}.
    \end{equation}
In particular,
\begin{equation}\label{omega1}
    \Omega_1=\{x\in M \mid d(x,q)>\la d(x,p)\}.
\end{equation}
Finally, set 
\begin{equation*}
    \Omega:=\bigcup_{n=1}^\infty\Omega_n.
\end{equation*}

\begin{lma}
    The sets $\Omega_n$ are open.
\end{lma}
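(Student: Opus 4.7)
The natural approach is induction on $n$, with base case $\Omega_0 = \varnothing$ trivially open. For the inductive step, assume $\Omega_n$ is open, let $x \in \Omega_{n+1}$, and seek an open neighborhood of $x$ inside $\Omega_{n+1}$. If $x \in \Omega_n$ this is given by the induction hypothesis, so assume $x \in M \setminus \Omega_n$ with $d_{\Omega_n}(x, q) > \la d(x, p)$, and fix $\eps > 0$ so that $d_{\Omega_n}(x, q) - \la d(x, p) > 2\eps$. Any point $y$ near $x$ either belongs to the open set $\Omega_n \subseteq \Omega_{n+1}$ or lies in $M \setminus \Omega_n$, so it suffices to produce a neighborhood of $x$ in which every $y \in M \setminus \Omega_n$ satisfies $d_{\Omega_n}(y, q) > \la d(y, p)$.

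The function $d(\cdot, p)$ is continuous, so $\la d(y, p) < \la d(x, p) + \eps$ for $y$ close enough to $x$. Thus the main task is to show that $d_{\Omega_n}(\cdot, q)$ is lower semicontinuous on $M \setminus \Omega_n$ at $x$: $\liminf_k d_{\Omega_n}(y_k, q) \geq d_{\Omega_n}(x, q)$ whenever $y_k \to x$ with $y_k \in M \setminus \Omega_n$. To prove this, let $L$ denote the liminf; assume $L < \infty$ (else there is nothing to prove) and pass to a subsequence along which $d_{\Omega_n}(y_k, q) \to L$. For each $k$, choose a path $\gamma_k$ from $q$ to $y_k$ in the closed set $M \setminus \Omega_n$ parametrized by arc length on $[0, L_k]$ with $L_k \to L$, and extend $\gamma_k$ to $[0, \infty)$ by setting $\gamma_k(t) = y_k$ for $t \geq L_k$. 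All these paths are $1$-Lipschitz with image contained in a fixed closed ball around $q$, which is compact by Hopf--Rinow since $M$ is complete; Arzel\`a--Ascoli then produces a uniformly convergent subsequence with limit $\gamma$. The limit stays in the closed set $M \setminus \Omega_n$, joins $q$ to $x$, and has length at most $L$ on $[0, L]$, so $d_{\Omega_n}(x, q) \leq L$ as required.

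The main subtlety, I expect, is precisely this lower semicontinuity. Although $d_{\Omega_n}$ can jump strictly upward as one approaches the boundary of $\Omega_n$ (for example, across a thin tendril of $\Omega_n$ two nearby boundary points may be intrinsically far apart), the Arzel\`a--Ascoli compactness argument rules out a downward jump, which is exactly what is needed here. Combining the semicontinuity with the $2\eps$ margin then closes the induction.
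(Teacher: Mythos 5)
Your proof is correct and rests on the same key fact as the paper's: the induction step hinges on lower semicontinuity of $d_{\Omega_n}(\cdot,q)$ on $M\setminus\Omega_n$, which together with continuity of $d(\cdot,p)$ shows $\Omega_{n+1}$ is open (the paper phrases this dually, as $M\setminus\Omega_{n+1}$ being a sublevel set of a lower semicontinuous function, hence closed). The only genuine addition on your part is the Arzel\`a--Ascoli argument establishing the lower semicontinuity, which the paper simply asserts; that argument is sound (note the completeness of $M$ and the fact that the approximating paths stay in the closed set $M\setminus\Omega_n$ are both essential, and you use them correctly).
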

\begin{proof}
    $\Omega_0$ is trivially open. Assume that $\Omega_n$ is open. Then the distance function $d_{\Omega_n}(\cdot,q)$ is lower semicontinuous on $M \setminus \Omega_n$ with respect to the metric $d$, so the set $M \setminus \Omega_{n+1}$ is closed in $M \setminus \Omega_n$, and therefore in $M$. Thus $\Omega_{n+1}$ is open.
\end{proof}
\begin{lma}\label{eqtyineqty}
    Let $n\ge 0$. Then 
    \begin{align}
        d_{\Omega_{n}}(x,q)&\ge\la d(x,p) \qquad \text{ for all $x\in\partial\Omega_n$, and}\label{ineqty1}\\
        d_{\Omega_{n}}(x,q)&=\la d(x,p) \qquad \text{ for all $x\in\partial\Omega_{n+1}$.}\label{eqty1}
    \end{align}
\end{lma}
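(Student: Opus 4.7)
The plan is to prove the two inequalities together by induction on $n$. In the base case $n=0$, the set $\partial\Omega_0=\varnothing$, so (\ref{ineqty1}) is vacuous; and since $d_{\Omega_0}=d$, (\ref{eqty1}) reduces to the inclusion $\partial\Omega_1\subseteq\{x\in M:d(x,q)=\la d(x,p)\}$, which is immediate from (\ref{omega1}) and the continuity of the functions $d(\cdot,p)$ and $d(\cdot,q)$.

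For the inductive step, fix $n\ge 1$ and assume (\ref{eqty1}) at level $n-1$. The proof of (\ref{ineqty1}) at level $n$ is then immediate from monotonicity: since $\Omega_{n-1}\subseteq\Omega_n$, every Lipschitz path admissible for $d_{\Omega_n}$ is admissible for $d_{\Omega_{n-1}}$, so $d_{\Omega_{n-1}}\le d_{\Omega_n}$, and for $x\in\partial\Omega_n$ the inductive hypothesis yields $d_{\Omega_n}(x,q)\ge d_{\Omega_{n-1}}(x,q)=\la d(x,p)$.

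For (\ref{eqty1}) at level $n$, fix $x\in\partial\Omega_{n+1}$. Because $\Omega_{n+1}$ is open we have $x\notin\Omega_{n+1}$; unfolding the recursive definition of $\Omega_{n+1}$ then gives both $x\in M\setminus\Omega_n$ and the upper bound $d_{\Omega_n}(x,q)\le\la d(x,p)$. For the matching lower bound I would split into two cases. If $x\in\partial\Omega_n$, then (\ref{ineqty1}) at level $n$, just proved, gives $d_{\Omega_n}(x,q)\ge\la d(x,p)$. Otherwise $x\in M\setminus\overline{\Omega_n}$, and one can pick a normal geodesic ball $U$ around $x$ that is disjoint from $\Omega_n$. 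Since $x\in\partial\Omega_{n+1}$, every neighborhood of $x$ meets $\Omega_{n+1}$, so by shrinking we obtain a sequence $y_k\to x$ with $y_k\in U\cap(\Omega_{n+1}\setminus\Omega_n)$; by construction $d_{\Omega_n}(y_k,q)>\la d(y_k,p)$, while the minimizing geodesic in $U$ from $y_k$ to $x$ witnesses $d_{\Omega_n}(y_k,x)\le d(y_k,x)\to 0$. The triangle inequality $d_{\Omega_n}(y_k,q)\le d_{\Omega_n}(y_k,x)+d_{\Omega_n}(x,q)$ then yields $\la d(x,p)\le d_{\Omega_n}(x,q)$ in the limit, completing the proof.

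The main obstacle I expect is the last step, where a strict inequality along an approaching sequence must be promoted to a (non-strict) inequality at the limit point. The lower semicontinuity of $d_{\Omega_n}(\cdot,q)$ established earlier points in the wrong direction for this purpose, so the Riemannian structure enters essentially: one needs a small normal ball around $x$ inside $M\setminus\overline{\Omega_n}$, on which intrinsic and extrinsic distances agree, to let the triangle inequality close the argument. The two-case split according to whether or not $x\in\partial\Omega_n$ is the organizational trick that keeps this transparent.
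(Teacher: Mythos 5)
Your proof is correct and follows the same inductive structure and two-case split as the paper's. The only difference is at the final step, where the paper simply invokes continuity of $d_{\Omega_n}(\cdot,q)$ on $M\setminus\overline{\Omega_n}$, while you unpack the relevant half of that continuity (the bound $d_{\Omega_n}(x,q)\ge\limsup_k d_{\Omega_n}(y_k,q)$) via a normal geodesic ball and the triangle inequality — a sound and slightly more explicit version of the same argument, not a different route.
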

\begin{proof}
    For $n=0$, \eqref{ineqty1} is vacuous and \eqref{eqty1} is obvious from \eqref{omega1}. Now let $n\ge 1$ and assume that the claim holds for $n-1$. Let $x\in\partial\Omega_n$. Then since $\Omega_{n-1}\subseteq\Omega_n$, we have
    $$d_{\Omega_n}(x,q)\ge d_{\Omega_{n-1}}(x,q)=\la d(x,p).$$
    Now let $x\in\partial\Omega_{n+1}$. Then in particular $x\in M\setminus\Omega_{n+1}$, which by definition means $d_{\Omega_n}(x,q)\le\la d(x,p)$. Now, either $x\in\partial\Omega_n$, in which case reverse inequality holds by \eqref{ineqty1}, or $x\notin\ol{\Omega_n}$, and then the reverse inequality holds since $d_{\Omega_n}$ is continuous on $M\setminus\overline{\Omega_n}$ and $x\in\partial\Omega_{n+1}$.
\end{proof}

A subset $S\subseteq M$ is said to be \textit{star-shaped} about a point $x_0\in S$ if for every $x\in S$, any minimizing geodesic joining $x_0$ and $x$ lies inside $S$.
\begin{lma}
    The set $\Omega_n$ is star-shaped about $p$ for all $n\ge0 $. Thus $\Omega$ is star-shaped about $p$.
\end{lma}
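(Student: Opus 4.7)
The plan is to prove the star-shapedness of each $\Omega_n$ by induction on $n$; the statement for $\Omega$ then follows immediately, since a union of sets star-shaped about a common point is itself star-shaped about that point. The base case $n=0$ is vacuous.

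For the inductive step, assume $\Omega_n$ is star-shaped about $p$, pick $x \in \Omega_{n+1}$, let $\gamma\colon[0,d(p,x)] \to M$ be any minimizing geodesic from $p$ to $x$, and aim to show that each $y := \gamma(t_0)$ lies in $\Omega_{n+1}$. The easy cases dispatch themselves: if $x \in \Omega_n$, the inductive hypothesis puts all of $\gamma$ inside $\Omega_n \subseteq \Omega_{n+1}$, and if $y \in \Omega_n$ there is nothing to prove. So the substantive case is $x \in \Omega_{n+1} \setminus \Omega_n$ and $y \notin \Omega_n$, and the goal becomes verifying $d_{\Omega_n}(y,q) > \la d(y,p)$.

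The crux is a \emph{no-return} property: the forward tail $\gamma|_{[t_0,d(p,x)]}$ must lie entirely in $M\setminus\Omega_n$. Indeed, any point $z=\gamma(t)$ with $t>t_0$ lying in $\Omega_n$ would, via the minimizing sub-geodesic $\gamma|_{[0,t]}$ combined with the inductive star-shapedness of $\Omega_n$, force $y\in\Omega_n$, a contradiction. This forward tail is then an admissible competitor for the intrinsic metric, giving $d_{\Omega_n}(y,x)\leq d(y,x)$. Combining this bound with the triangle inequality for $d_{\Omega_n}$, the defining inequality $d_{\Omega_n}(x,q) > \la d(x,p)$, and the identity $d(x,p) = d(p,y)+d(y,x)$ (valid because $\gamma$ is minimizing) yields
$$d_{\Omega_n}(y,q) \geq d_{\Omega_n}(x,q) - d_{\Omega_n}(y,x) > \la d(x,p) - d(y,x) = \la d(p,y) + (\la-1)d(y,x) \geq \la d(p,y),$$
which places $y$ in $\Omega_{n+1}$.

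The only non-routine step is the no-return observation, where the inductive star-shapedness of $\Omega_n$ is deployed to rule out re-entry of $\gamma$ into $\Omega_n$. Once that is in hand, membership of $y$ in $\Omega_{n+1}$ reduces to a single line of triangle-inequality bookkeeping; the degenerate case $d_{\Omega_n}(x,q)=\infty$ (possible if $M\setminus\Omega_n$ is disconnected) causes no trouble, since the relevant inequalities hold trivially in $[0,\infty]$.
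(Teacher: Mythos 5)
Your proof is correct and follows essentially the same route as the paper's: induction on $n$, with the inductive star-shapedness of $\Omega_n$ used to rule out the geodesic re-entering $\Omega_n$ after leaving it (your ``no-return'' step, which in the paper appears as the observation that $\{t:\gamma(t)\in\Omega_{n-1}\}$ is an initial interval $[0,t_0)$), followed by the identical chain of triangle-inequality estimates exploiting $\lambda>1$. The only differences are cosmetic: a shift in indexing and in how the exit-time observation is packaged.
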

\begin{proof}
    For $n=0$ there is nothing to prove. Let $n\ge1$ and let $x\in\Omega_n$. We must show that a minimizing geodesic joining $p$ and $x$ lies in $\Omega_n$. By induction we may assume that $x\notin\Omega_{n-1}$, so that $d_{\Omega_{n-1}}(x,q)>\la d(x,p)$. Let $\gamma$ be a unit-speed minimizing geodesic from $p$ to $x$. By induction, there exists some $t_0>0$ such that $\gamma(t)\in\Omega_{n-1}$ exactly when $t<t_0$, and $x=\gamma(t_1)$ for some $t_1\ge t_0$. Since $\gamma\vert_{[t_0,t_1]}$ lies outside $\Omega_{n-1}$, we have that $d_{\Omega_{n-1}}(\gamma(t),x)=t_1-t$ for all $t_0\le t\le t_1$, and so
    \begin{equation*}
        d_{\Omega_{n-1}}(\gamma(t),q)\ge d_{\Omega_{n-1}}(x,q)-d_{\Omega_{n-1}}(x,\gamma(t))>\la d(x,p)-(t_1-t)=(\la-1)t_1+t\ge \la t
    \end{equation*}
    for all $t_0\le t\le t_1$, which implies that $\gamma(t)\in\Omega_n$ as desired.
\end{proof}

The following lemma states the key property of $\Omega$.
\begin{lma}\label{Omegaprop0}
    For all $x\in M\setminus\Omega$,
    \begin{equation}
        d_\Omega(x,q)\le\la d(x,p),
    \end{equation}
    with equality if and only if $x\in\partial \Omega$. 
\end{lma}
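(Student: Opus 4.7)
My plan is to first establish the inequality by identifying $d_\Omega(x,q)$ with $\lim_n d_{\Omega_n}(x,q)$ via an Arzel\`a--Ascoli argument, and then handle the two directions of the equality case separately. I expect the Arzel\`a--Ascoli step (and in particular showing the limit path does not enter $\Omega$) together with the crossing-point construction on $\partial\Omega_{m_k}$ in the boundary case to be the main obstacles.

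For the inequality, I would observe that for $x \in M \setminus \Omega$ the quantities $L_n := d_{\Omega_n}(x,q)$ are non-decreasing in $n$ (since $\Omega_n \subseteq \Omega_{n+1}$) and bounded above by $\la d(x,p)$ by the very definition of $\Omega_{n+1}$; set $L := \lim_n L_n \le \la d(x,p)$. Pick length-minimizing paths $\gamma_n: [0,L_n] \to M \setminus \Omega_n$ from $x$ to $q$ (available since $(M \setminus \Omega_n, d_{\Omega_n})$ is geodesic), extend each by the constant $q$ to $[0,L]$, and apply Arzel\`a--Ascoli (invoking Hopf--Rinow to get compactness of $\overline{\mathcal{B}(x,L)}$) to extract a subsequence converging uniformly to a 1-Lipschitz path $\gamma: [0,L] \to M$ from $x$ to $q$. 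To verify that $\gamma$ avoids $\Omega$, suppose $\gamma(t_0) \in \Omega$; then $\gamma(t_0) \in \Omega_N$ for some $N$, and by openness of $\Omega_N$ we would have $\gamma_{n_k}(t_0) \in \Omega_N \subseteq \Omega_{n_k}$ for large $k$, contradicting $\gamma_{n_k}(t_0) \in M \setminus \Omega_{n_k}$ (the constant tail at $q$ is harmless since $q \notin \Omega$). This yields $d_\Omega(x,q) \le L \le \la d(x,p)$.

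To show equality when $x \in \partial \Omega$, I would take $x_k \to x$ with $x_k \in \Omega$ and pick $m_k \ge 1$ minimal with $x_k \in \Omega_{m_k}$. On a minimizing geodesic from $x$ to $x_k$ the first crossing into $\Omega_{m_k}$ produces a point $y_k \in \partial \Omega_{m_k}$ with $d(x,y_k) \le d(x,x_k)$, and the preceding segment stays in $M \setminus \Omega_{m_k} \subseteq M \setminus \Omega_{m_k - 1}$, so $d_{\Omega_{m_k - 1}}(x,y_k) \le d(x,x_k)$. Lemma \ref{eqtyineqty} gives $d_{\Omega_{m_k - 1}}(y_k, q) = \la d(y_k, p)$, and since $\Omega_{m_k - 1} \subseteq \Omega$ the triangle inequality yields
\begin{equation*}
d_\Omega(x,q) \;\ge\; d_{\Omega_{m_k - 1}}(x,q) \;\ge\; \la d(y_k,p) - d(x,x_k).
\end{equation*}
Letting $k \to \infty$ and using continuity of $d(\cdot,p)$ gives $d_\Omega(x,q) \ge \la d(x,p)$, hence equality.

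For the converse, I would assume $x$ is in the interior of $M \setminus \Omega$ and derive strict inequality. Pick $\delta > 0$ with $\mathcal{B}(x,\delta) \subseteq M \setminus \Omega$, noting $x \ne p$ since $p \in \Omega_1$. Let $y$ be at distance $\eps \in (0,\delta)$ from $x$ along the minimizing geodesic from $x$ to $p$, so that $d_\Omega(x,y) \le \eps$ and $d(y,p) = d(x,p) - \eps$. If we had $d_\Omega(x,q) = \la d(x,p)$, the triangle inequality in $d_\Omega$ would give $d_\Omega(y,q) \ge \la d(x,p) - \eps$ while $\la d(y,p) = \la d(x,p) - \la\eps$, producing $d_\Omega(y,q) - \la d(y,p) \ge (\la - 1)\eps > 0$, contradicting the already-established inequality applied to $y \in M \setminus \Omega$.
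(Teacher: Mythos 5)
Your proof is correct, but it takes a genuinely different route from the paper's. The paper splits from the start into $x\in\partial\Omega$ and $x\in M\setminus\ol{\Omega}$: for boundary points it obtains both directions of the equality by projecting $x$ onto $\ol{\Omega_n}$ (giving points $x_n\in\partial\Omega_n$), combining Lemma~\ref{eqtyineqty} with a triangle inequality to get $\ge$ and an Arzel\`a--Ascoli limit of the length-$\la d(x_n,p)$ paths to get $\le$; for interior points it appeals to the star-shapedness of $\Omega$ to find $x'\in\partial\Omega$ on the $p$--$x$ geodesic and writes $d(x,p)=d_\Omega(x,x')+\la^{-1}d_\Omega(x',q)>\la^{-1}d_\Omega(x,q)$. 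You instead (i) prove the non-strict inequality uniformly for all $x\in M\setminus\Omega$ by a monotone-limit argument on $L_n=d_{\Omega_n}(x,q)$ together with a single Arzel\`a--Ascoli extraction, (ii) prove the $\ge$ direction at boundary points via first-crossing points $y_k\in\partial\Omega_{m_k}$ on geodesics from $x$ to nearby $x_k\in\Omega$ (a different but equally valid choice of approximating boundary points, again feeding into Lemma~\ref{eqtyineqty}), and (iii) prove strictness at interior points by perturbing $x$ a distance $\eps$ toward $p$ and playing the already-established inequality at $y$ against the triangle inequality in $d_\Omega$. Your decomposition is somewhat more modular: it never invokes the star-shapedness lemma, and the perturbation trick in step (iii) is elementary and self-contained. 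The paper's version, by contrast, yields an explicit geometric picture (the path from $q$ to $x$ through the boundary point $x'$) at the cost of relying on star-shapedness. Both are sound; a couple of small points worth making explicit in your write-up are that $L_n<\infty$ (so the minimizing $\gamma_n$ exists in the relevant path-component) precisely because $x\notin\Omega_{n+1}$, and that in step (iii) one has $d(x,p)\ge\delta>\eps$ automatically because $p\in\Omega_1\subseteq\Omega$.
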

\begin{proof}
    Let $x\in\partial\Omega$. For each $n\ge 1$, let $x_n\in\ol{\Omega_n}$ be a point of minimal distance to $x$. Clearly $x_n\to x$ as $n\to\infty$. The point $x_n$ must lie in $\partial\Omega_n$, for otherwise we would have $x\in\Omega_n\subseteq\Omega$. \\
    Now, on one hand, by \eqref{eqty1},
    \begin{align}\label{dOmegacomp}
        \la d(x_n,p) & = d_{\Omega_{n-1}}(x_n,q)\le d_{\Omega_{n-1}}(x_n,x)+d_{\Omega_{n-1}}(x,q) \le d_{\Omega_n}(x_n,x)+d_\Omega(x,q),
    \end{align}
    and taking $n\to \infty$ we get $\la d(x,p)\le d_\Omega(x,q)$. On the other hand, again by \eqref{eqty1}, there are curves $\gamma_n$ joining $q$ and $x_n$ of length $\la d(x_n,p)$ and lying outside $\Omega_{n-1}$, and by the Arzel{\'a}-Ascoli theorem, a subsequence of them converges to a curve $\gamma$ lying outside $\Omega_{n-1}$ for all $n$, and thus outside $\Omega$, and of length $\la d(x,p)$. This implies that $d_\Omega(x,q)\le \la d(x,p)$, and together with \eqref{dOmegacomp} we have equality.\\
    Now let $x\in M\setminus\ol{\Omega}$, and let $\gamma$ by a unit-speed minimizing geodesic from $p$ to $x$. Since $\Omega$ is star-shaped, we  there exist $0<t_0<t_1$ such that $\gamma(t_1)=x$ and $\gamma(t)\in\Omega$ exactly when $t<t_0$. Let $x'=\gamma(t_0)$. Then $x'\in\partial\Omega$, and since $\gamma\vert_{[t_0,t_1]}$ lies outside $\Omega$, we get
    \begin{align*}
        d(x,p)&=d(x,x')+d(x',p)=d_{\Omega}(x,x')+\la^{-1}d_\Omega(x',q)> \la^{-1}(d_\Omega(x,x')+d_\Omega(x',q))\\&\ge \la^{-1}d_\Omega(x,q).
    \end{align*}
\end{proof}

We now have what we need in order to argue that $\Omega$ coincides with $B_\infty$, and write down the solution to the competition.
\begin{ppn}\label{OmegaisB}
    Let $\la>1$. Let $\{A_t,B_t\}$ be a growth competition between $p,q$. Then $B_\infty=\Omega$ and $A_\infty=M\setminus\Omega$.
\end{ppn}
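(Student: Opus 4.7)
The plan is to establish $M\setminus\Omega\subseteq A_\infty$ and $\Omega\subseteq B_\infty$ (together with $\Omega\cap A_\infty=\varnothing$) as separate inclusions, and then pin down the two equalities using the disjointness from Lemma \ref{ABdisjoint}.

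For $M\setminus\Omega\subseteq A_\infty$, I would use Lemma \ref{Omegaprop0} directly. Given $x\in M\setminus\Omega$, take a minimizing path $\gamma$ from $q$ to $x$ in $(M\setminus\Omega,d_\Omega)$, parameterized at constant speed $\la$. For each intermediate $s$, the restriction $\gamma\vert_{[0,s]}$ is itself minimizing, so $d_\Omega(q,\gamma(s))=\la s$, and Lemma \ref{Omegaprop0} applied at $\gamma(s)\in M\setminus\Omega$ gives $\la s\le\la d(\gamma(s),p)$. Thus $d(\gamma(s),p)\ge s$, so $\gamma(s)\notin\mathcal{B}(p,s)\supseteq B_s$; that is, $\gamma$ avoids $B$, placing $x$ in $A_\infty$.

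For $\Omega\subseteq B_\infty$ I would induct on $n$, proving $\Omega_n\subseteq B_\infty$ and $\Omega_n\cap A_\infty=\varnothing$ simultaneously, with the base case $n=0$ vacuous. In the inductive step, fix $x\in\Omega_n\setminus\Omega_{n-1}$ and let $\eta:[0,d(x,p)]\to M$ be the unit-speed minimizing geodesic from $p$ to $x$. Star-shapedness of $\Omega_n$ and $\Omega_{n-1}$ produces some $t_0\in[0,d(x,p)]$ with $\eta(s)\in\Omega_{n-1}$ for $s<t_0$ and $\eta(s)\in\Omega_n\setminus\Omega_{n-1}$ for $s\in[t_0,d(x,p)]$. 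For $s<t_0$ the inductive hypothesis immediately gives $\eta(s)\notin A_s$. For $s\in[t_0,d(x,p)]$, I would argue by contradiction: if $\eta(s)\in A_s$ then there is a $\la$-Lipschitz path $\gamma:[0,s']\to M$ from $q$ to $\eta(s)$ with $s'\le s$ avoiding $B$, and every restriction $\gamma\vert_{[0,\tau]}$ is itself a valid path avoiding $B$, so $\gamma(\tau)\in A_\tau\subseteq A_\infty$ for all $\tau$. The inductive hypothesis then forces the image of $\gamma$ into $M\setminus\Omega_{n-1}$, so $d_{\Omega_{n-1}}(q,\eta(s))\le\la s'\le\la d(\eta(s),p)$, contradicting $\eta(s)\in\Omega_n\setminus\Omega_{n-1}$. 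Thus $\eta$ avoids $A$, placing $x$ in $B_\infty$, and Lemma \ref{ABdisjoint} yields $x\notin A_\infty$, closing the induction.

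The subtle step is ruling out incursions of $\gamma$ into $\Omega_{n-1}$: a priori $\gamma$ only avoids $B$ instantaneously at each time, and nothing formally prevents it from traversing regions slated to eventually belong to $B_\infty$ but not yet filled in at time $\tau$. The workaround is to exploit that every prefix of $\gamma$ is itself a valid $A$-path, forcing the image of $\gamma$ into $A_\infty$, at which point the inductive hypothesis cleanly confines it to $M\setminus\Omega_{n-1}$.
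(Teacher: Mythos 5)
Your proof is correct and takes essentially the same approach as the paper: both rest on Lemma~\ref{Omegaprop0}, star-shapedness of the $\Omega_n$, the inclusion $B_s\subseteq\mathcal{B}(p,s)$, and an induction showing curves avoiding $B$ stay outside $\Omega_{n-1}$ while unit-speed geodesics from $p$ inside $\Omega_n$ avoid $A$. The only cosmetic differences are that you derive $B_\infty\subseteq\Omega$ from $M\setminus\Omega\subseteq A_\infty$ via Lemma~\ref{ABdisjoint} (the paper does the reverse), and you obtain the inductive contradiction pointwise at each $s$ rather than passing to a limit along a sequence $s_k\searrow t_0$ as the paper does.
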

\begin{proof}
    For the inclusion $\Omega\subseteq B_\infty$, we must show that $\Omega_n\subseteq B_\infty$ for all $n\in\N$.\\
    For $n=0$ there is nothing to prove. Let $n\ge 1$. Let $\gamma$ be a unit speed geodesic emanating from $p$, and suppose that $\gamma$ does not avoid $A$. Let $t_0=\inf\{t \mid \gamma(t)\in A_t\}$. There is a sequence of curves $\eta_k$ avoiding $B$ such that $\eta_k(s_k)=\gamma(s_k)$ for some $s_k\searrow t_0$. By induction, $\eta_k\vert_{[0,s_k]}$ lie outside $\Omega_{n-1}$, so $d_{\Omega_{n-1}}(\gamma(s_k),q)\le \mathrm{Length}(\eta_k\vert_{[0,s_k]})\le \la s_k$ since $\eta_k$ are $\la$-Lipschitz. Taking $k\to \infty$ we see that $\gamma(t_0)\notin \Omega_{n-1}$ and  $d_{\Omega_{n-1}}(\gamma(t_0),q)\le \la d(\gamma(t_0),p)$, whence $\gamma(t_0)\notin\Omega_n$. Thus any unit speed geodesic emanating from $p$ and staying inside $\Omega_n$ is a curve avoiding $A$. Since $\Omega_n$ is star-shaped about $p$, this proves that $\Omega_n\subseteq B_\infty$ for all $n\in \N$ whence $\Omega\subseteq B_\infty$.
    
    In the other direction, let $\gamma$ be a 1-Lipschitz path with $\gamma(0)=p$, and suppose that $\gamma(t_1)\notin\Omega$ for some $t_1$. Then by Lemma \ref{Omegaprop0}, there is a path $\eta$ of length at most $\la d(\gamma(t_1),p)\le\la t_1$ joining $q$ and $\gamma(t_1)$ and lying entirely outside $\Omega$, and if we take $\eta$ to be minimizing and parametrized by speed $\la$ then $\eta(t_2)=\gamma(t_1)$ for some $t_2\le t_1$, and $d(\eta(t),p)\ge \la^{-1}d_{\Omega}(\eta(t),q)=t$ for all $t\in[0,t_1]$ by Lemma \ref{Omegaprop0}, whence $\eta$ is a curve avoiding $B$ and therefore $\gamma(t_1)=\eta(t_2)\in A_{t_2}\subseteq A_{t_1}$. It follows that $\gamma$ does not avoid $A$. Thus curves avoiding $A$ cannot leave $\Omega$ and therefore $B_\infty\subseteq \Omega$.
    
    We have shown that $B_\infty=\Omega$. Lemma \ref{Omegaprop0} implies that $M\setminus\Omega$ is path-connected, so for every $x\in M\setminus\Omega$, there is a curve $\eta$ lying outside $\Omega$ and joining $q$ to $x$. Since $B_\infty\subseteq\Omega$, the curve $\eta$ avoids $B$ and therefore $x\in A_\infty$. On the other hand, by Lemma \ref{ABdisjoint}, $A_\infty\subseteq M\setminus \Omega$.
\end{proof}

\begin{cor}\label{solcor}
    For each $p,q\in M$ and $\la>1$, there is a unique growth competition, given by
    \begin{equation}\label{sol}
    \begin{split}
        A_t & = \overline{\mathcal{B}}_\Omega(q,\lambda t)\\
        B_t & =\Omega \cap \mathcal{B}(p,t),
    \end{split}
    \end{equation}
    where $\overline{\mathcal{B}}_\Omega(x,R)$ denotes the closed ball of radius $R$ in the metric space $(M\setminus \Omega,d_\Omega)$.
\end{cor}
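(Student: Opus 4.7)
The plan is to upgrade Proposition \ref{OmegaisB}, which identifies the infinite-time limits as $B_\infty=\Omega$ and $A_\infty=M\setminus\Omega$, into a description of every finite-time slice. The heart of the matter is uniqueness: given any growth competition $\{A_t,B_t\}$, I will show the formulas \eqref{sol} hold, and existence will then follow by verifying that the candidate sets themselves satisfy \eqref{ABconditions}.

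I first pin down $B_t$. The inclusion $B_t\subseteq \Omega\cap\mathcal{B}(p,t)$ is immediate from $B_t\subseteq B_\infty=\Omega$ and $B_t\subseteq\mathcal{B}(p,t)$. For the reverse, given $x\in\Omega$ with $d(x,p)<t$, the unit-speed minimizing geodesic from $p$ to $x$ stays inside $\Omega=M\setminus A_\infty$ because $\Omega$ is star-shaped about $p$, hence avoids $A$ throughout; since $\Omega$ is open I extend it slightly past $x$ within $\Omega$, obtaining an $A$-avoiding curve that witnesses $x\in B_{t'}\subseteq B_t$ for some $d(x,p)<t'<t$, giving $B_t=\Omega\cap\mathcal{B}(p,t)$. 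Next, for $A_t=\overline{\mathcal{B}}_\Omega(q,\la t)$, the ``$\supseteq$'' direction recycles the construction from the proof of Proposition \ref{OmegaisB}: for $x\in M\setminus\Omega$ with $d_\Omega(q,x)\le\la t$, the minimizing curve in $(M\setminus\Omega,d_\Omega)$ from $q$ to $x$, parametrized at speed $\la$, lies in $M\setminus\Omega\supseteq M\setminus B_\infty$, hence avoids $B$, placing $x\in A_{d_\Omega(q,x)/\la}\subseteq A_t$.

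The main obstacle is the reverse inclusion $A_t\subseteq\overline{\mathcal{B}}_\Omega(q,\la t)$. Having just shown $B_s=\Omega\cap\mathcal{B}(p,s)$, a $\la$-Lipschitz path $\gamma$ from $q$ avoiding $B$ is only forbidden from the already-swallowed part $\Omega\cap\mathcal{B}(p,s)$, so in principle $\gamma$ could dip into $\Omega$ at points where $d(\gamma(s),p)\ge s$. The technical core of the argument is to rule out any such dip: if $\gamma$ first crossed $\partial\Omega$ at time $s^*$, then $\gamma|_{[0,s^*]}$ would be a minimizing $\la$-speed curve in $M\setminus\Omega$, and Lemma \ref{Omegaprop0} would force $d(\gamma(s^*),p)=s^*$; a first-variation analysis at $\partial\Omega$, comparing the speed bound $|\gamma'|\le\la$ with the level-set relation $d_\Omega(\cdot,q)=\la d(\cdot,p)$ that characterizes $\partial\Omega$, rules out continuing into $\Omega$ while maintaining the required radial growth $d(\gamma(s),p)\ge s$. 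With $\gamma|_{[0,s_0]}\subseteq M\setminus\Omega$ established, the length of $\gamma$ yields $d_\Omega(q,x)\le\la s_0\le\la t$, completing the identification. Uniqueness is then immediate, and existence follows by rerunning the same constructions to verify \eqref{ABconditions} directly for the sets in \eqref{sol}.
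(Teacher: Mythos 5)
Your decomposition — pin down $B_t$ first, then handle $A_t$, with the hard direction being $A_t\subseteq\overline{\mathcal{B}}_\Omega(q,\la t)$ — mirrors the paper's structure, and your preliminary steps are sound. The issue is that at what you yourself call the ``technical core,'' the argument is not actually there, and I don't believe it can be completed by first variation alone.

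Let me spell out why. As you observe, if $s^*$ is the first time an $A$-avoiding curve $\gamma$ touches $\partial\Omega$ (wait, I mean $B$-avoiding), one can indeed conclude (combining $d_\Omega(\gamma(s^*),q)\le\la s^*$ from the Lipschitz bound, $d(\gamma(s^*),p)\ge s^*$ from $B$-avoidance just past $s^*$, and Lemma \ref{Omegaprop0}) that $d(\gamma(s^*),p)=s^*$ and $\gamma\vert_{[0,s^*]}$ is minimizing. But the constraint for $\gamma$ to continue into $\Omega$ while avoiding $B_s=\Omega\cap\mathcal{B}(p,s)$ is merely $d(\gamma(s),p)\ge s$ for $s>s^*$, and the speed bound $|\dot\gamma|\le\la>1$ is perfectly compatible with this: $\gamma$ may advance its radial coordinate at rate up to $\la>1$, easily outpacing the unit-rate growth of $\mathcal{B}(p,s)$ while still having tangential speed to spare. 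So ``comparing the speed bound with the level-set relation'' does not produce a contradiction. The obstruction to entering $\Omega$ is not a pointwise infinitesimal one; it depends on the global structure of $\Omega$, in particular on the fact that the part of $\partial\Omega$ one would have to cross is itself ``receding'' in a way calibrated to speed $\la$. This is visible in the paper's later analysis (Proposition \ref{unbounded}), where the tail of $\partial\Omega$ is literally a speed-$\la$ curve with unit radial growth — a curve that avoids $B$ can keep pace with it but cannot overtake it.

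The paper handles this by not working with $\Omega$ directly but with the exhaustion $\Omega_n$. There, one gets the crucial \emph{strict} inequality: if $\gamma$ has not entered $\Omega_n$ and $\gamma(t_0)\in\Omega_{n+1}\setminus\ol{\Omega_n}$, then by the defining recursion $d_{\Omega_n}(\gamma(t_0),q)>\la d(\gamma(t_0),p)$, while the Lipschitz bound gives $d_{\Omega_n}(\gamma(t_0),q)\le\la t_0$; so $d(\gamma(t_0),p)<t_0$, which combined with $\gamma(t_0)\in\Omega$ lands $\gamma(t_0)$ in $B_{t_0}$ — a clean contradiction, with no need for any differential or curvature hypothesis. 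Working with the limit $\Omega$ alone, as you do, one only has equality $d_\Omega(\cdot,q)=\la d(\cdot,p)$ on $\partial\Omega$ and no analogue of the strict inequality on the inside, so the inductive mechanism that powers the paper's proof is unavailable. To close the gap, you would essentially need to reintroduce the $\Omega_n$ recursion (or find a substitute for it), not a first-variation computation.
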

\begin{proof}
    Proposition \ref{OmegaisB}, Lemma \ref{OmegaisB} and the fact that $\Omega$ is star-shaped about $p$, imply that any growth competition must take the form \eqref{sol}. So it remains to show that this is indeed a growth competition. Clearly any path starting at $p$ and staying inside $\Omega$ avoids $A$, and any path starting at $q$ and not intersecting $\Omega$ avoids $B$. So we have the inclusions  $\subseteq$ in \eqref{ABconditions}. 
    
    For the inclusions $\supseteq$, first let $\gamma$ be a curve avoiding $A$. The same argument as in the proof of Proposition \ref{OmegaisB} implies that $\gamma$ remains inside $\Omega$, and since it is 1-Lipschitz, it is contained in $\mathcal{B}(p,t)$. Now let $\gamma:[0,t]\to M$ be a curve avoiding $B$; we argue by induction that it does not enter $\Omega_n$. Again $n=0$ is trivial. Let $n \ge 0$ and assume that $\gamma(t_0) \in \Omega_{n+1}$. By the definition of $\Omega_{n+1}$, and since $\gamma$ does not intersect $\Omega_n$ by induction, $\mathrm{Length}(\gamma\vert_{[0,t_0]}) > \lambda d(x,p)$. Since $\gamma$ is $\lambda$-Lipschitz, it follows that $t_0>d(x,p)$, whence $\gamma(t_0) \in B_{t_0}$, which is a contradiction to the assumption that $\gamma$ avoids $B$. Thus $\gamma(s) \notin \Omega_n$ for all $s\in[0,t]$ and all $n\in\N$, whence $\gamma(s)\notin\Omega$ for all $s\in[0,t]$ . Since $\gamma$ is $\lambda$-Lispchitz, $\gamma(s)\in\overline{\mathcal{B}}_\Omega(q,\lambda t)$ for all $s\in[0,t]$. This finishes the proof of the inclusions $\supseteq$ in \eqref{ABconditions}.
\end{proof}

\section{Coexistence on spherically symmetric manifolds}\label{rotsym}

We now restrict our attention to growth competitions taking place on complete, non-compact manifolds which are spherically symmetric about the point $p$. By spherically symmetric we mean that the metric takes the form
\begin{equation}\label{rotsymmetric}
    g=dr^2+G(r)^2 d\theta^2
\end{equation}
where $(r,\theta)\in(0,R)\times S^{n-1}$ are polar normal coordinates centered at $p$, $d\theta^2$ is the standard metric on $S_pM\cong S^{n-1}$, where $S_xM$ denotes the unit sphere of the tangent space to $M$ at $x$, and $G$ is a smooth positive function satisfying $G\to 0$ and $G/r\to 1$ as $r\to 0$. Since $M$ is noncompact, the coordinates $(r,\theta)$ are global, i.e. $R=\infty$, and $M$ is diffeomorphic to $\R^n$.
Set
\begin{equation*}
    \ell:=d(p,q).
\end{equation*}
Let $u_0\in S_pM$ denote the initial velocity vector of the unit-speed geodesic from $p$ to $q$, so that in polar coordinates, $q=(\ell,u_0)$.

Coexistence in this setting is related to the convergence of the integral
\begin{equation}\label{Gint}
    I:=\int_1^\infty\frac{1}{G(r)}dr.
\end{equation}
\begin{ppn}\label{bbdd}
    Suppose that $M$ is spherically symmetric about $p$, and that $I=\infty$. Then $B_\infty$ is bounded.
\end{ppn}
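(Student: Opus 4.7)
The plan is to encode $\Omega = B_\infty$ as a monotone angular profile in polar coordinates, derive a first-order differential inequality for this profile from Lemma \ref{Omegaprop0}, and integrate using the hypothesis $I = \infty$.

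First, by uniqueness of the growth competition (Corollary \ref{solcor}) together with the reflection symmetry of the data $(p,q,\la)$ across the radial geodesic through $p$ and $q$, the set $\Omega$ is invariant under this reflection. Combined with star-shapedness of $\Omega$ about $p$ and the rotational symmetry of $g$, this lets me write, in polar coordinates about $p$ with $\theta=0$ pointing toward $q$,
\[
    \Omega = \{(r,\theta) : |\theta|<\phi(r)\}
\]
for some nonincreasing function $\phi:[0,\infty)\to[0,\pi]$. Since $\Omega\supseteq\Omega_1$ and the Apollonius-type condition of \eqref{omega1} holds on all of $S(p,r)$ for $r$ small enough, $\phi\equiv\pi$ on an initial interval $[0,r^*]$. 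Boundedness of $B_\infty$ is equivalent to $\phi(r)=0$ for all sufficiently large $r$.

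Second, for $r>r^*$ let $x(r):=(r,\phi(r))\in\partial\Omega$; by Lemma \ref{Omegaprop0}, $d_\Omega(x(r),q)=\la r$. For $r^*<r_1<r_2$ the triangle inequality in $(M\setminus\Omega,d_\Omega)$ gives $d_\Omega(x(r_1),x(r_2))\ge\la(r_2-r_1)$. On the other hand, for each small $\eps>0$ the curve $r\mapsto(r,\phi(r)+\eps)$, closed up by angular arcs of length $O(\eps)$ at the endpoints and by radial arcs at jumps of $\phi$, lies in $M\setminus\Omega$ (using that $\Omega$ is open and $\Omega\cap S(p,r)=\{|\theta|<\phi(r)\}$) and in the metric $g=dr^2+G(r)^2\,d\theta^2$ has total $g$-length at most
\[
    \int_{r_1}^{r_2}\sqrt{1+G(r)^2\phi'(r)^2}\,dr + \int_{[r_1,r_2]} G\,d(-\phi_{\mathrm{sing}})+O(\eps),
\]
where $\phi_{\mathrm{sing}}$ is the singular part of $\phi$. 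Letting $\eps\to0$, dividing by $r_2-r_1$, and applying Lebesgue differentiation as $r_2\to r_1$ (the singular term has density zero almost everywhere with respect to Lebesgue measure), I obtain at almost every $r>r^*$
\[
    \sqrt{1+G(r)^2\phi'(r)^2}\ge\la,\qquad \text{i.e.,}\qquad \phi'(r)\le-\frac{\sqrt{\la^2-1}}{G(r)}.
\]

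Finally, since $\phi$ is nonincreasing, its singular part contributes nonpositively to the increment $\phi(r)-\phi(r^*)$, so
\[
    \phi(r)\le \pi-\sqrt{\la^2-1}\int_{r^*}^r\frac{ds}{G(s)}.
\]
Under the hypothesis $I=\infty$, the right-hand side tends to $-\infty$ as $r\to\infty$, forcing $\phi(r)=0$ for all sufficiently large $r$. Hence $\Omega\subseteq\mathcal{B}(p,R)$ for some finite $R$, and $B_\infty$ is bounded. The main technical obstacle is the upper length estimate when $\phi$ is only monotone (hence BV and possibly discontinuous, even with a singular continuous part); once that is handled via Lebesgue differentiation, the remaining steps reduce to a one-line integration.
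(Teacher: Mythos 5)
Your proposal takes a genuinely different route from the paper's, but it rests on a structural assertion about $\Omega$ that you do not prove and that is not available at this point in the paper. You claim that star-shapedness of $\Omega$ about $p$ together with the rotational and reflection symmetries forces $\Omega=\{(r,\theta):|\theta|<\phi(r)\}$ for a nonincreasing $\phi$. What the symmetries and star-shapedness actually give is $\Omega=\{(r,\theta):r<f(\theta)\}$ with $f$ depending only on $\psi:=\angle(\theta,u_0)$. To rewrite this as a shrinking cap $\{\angle(\theta,-u_0)<\phi(r)\}$ you need $f$ to be monotone in $\psi$, equivalently each slice of $\Omega$ by a geodesic sphere about $p$ must be a single spherical cap. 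Nothing in Section 2 establishes this, and the paper's only structural description of $\partial\Omega$ (in the remark following Proposition \ref{unbounded}) uses nonpositive curvature in an essential way via Lemma \ref{visibilitylma}. Here there is no curvature hypothesis; a metric with $G$ dipping sharply on some annulus can make $\psi\mapsto d\bigl((r,\psi),q\bigr)$ non-monotone, so even the monotonicity of $f_1$ from \eqref{omega1} is not automatic. Without this monotonicity the curve $r\mapsto(r,\phi(r)+\eps)$ need not stay in $M\setminus\Omega$, and your length estimate and the resulting differential inequality do not follow.

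Two smaller issues: your orientation is reversed (with $\theta=0$ toward $q$, the cap $\Omega_1$ sits around $\theta=\pi$, not $\theta=0$), and the final step should be phrased as showing that $\{r:\phi(r)>0\}$ is bounded, since once $\phi(r)=0$ the point $x(r)$ is no longer on $\partial\Omega$ and the inequality $\phi(r)\le\pi-\sqrt{\la^2-1}\int_{r^*}^r G^{-1}$ cannot be applied literally.

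By contrast, the paper's proof sidesteps the geometry of $\partial\Omega$ entirely: it exhibits an explicit $\la$-Lipschitz spiral $\gamma(t)=(\ell+t,\eta(\alpha(t)))$ with $\alpha(t)=\sqrt{\la^2-1}\int_0^tG(\ell+s)^{-1}ds$, which starts at $q$, stays at radius $\ell+t>t$ (hence outside $B_t\subseteq\mathcal{B}(p,t)$), and reaches every angular direction within a time bounded uniformly in $\theta$ because $I=\infty$ makes $\alpha$ unbounded; star-shapedness of $B_\infty$ then traps it in a ball. Your differential inequality is essentially the infinitesimal dual of that spiral (the quantity $\sqrt{\la^2-1}/G$ appears in both), but the paper's version needs only the crude inclusion $B_t\subseteq\mathcal{B}(p,t)$ plus star-shapedness, which is precisely why it works without any curvature assumption. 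If you want to salvage your approach, you would first have to prove that $f$ is monotone in $\psi$, which appears to be at least as hard as Proposition \ref{bbdd} itself.
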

\begin{rmk*}
    In particular, if $M = \R^n$ then $B_\infty $ is bounded.
\end{rmk*}
\begin{proof}
    We prove that on every geodesic ray emanating from $p$, there is a point which can be reached by a path avoiding $B$ within some time $T$ independent of $\theta$. Since $B_\infty$ is star-shaped, the proposition will follow because then $B_\infty\subseteq \mathcal{B}(p, T)$.
    
    Let $\theta\in S_pM$ and let $\eta$ be a unit-speed great circle in $S_pM$ with $\eta(0)=u_0$ and $\eta(\tau)=\theta$ for some $\tau\le\pi$. Define a path $\gamma:[0,T]\to M$ in polar coordinates by 
    \begin{equation*}
        \gamma(t)=(\ell+t,\eta(\alpha(t)))
    \end{equation*}
    where
    \begin{equation*}
        \alpha(t):=\sqrt{\la^2-1}\int_{0}^{t}G(\ell+s)^{-1}ds.
    \end{equation*}
    Since $\alpha$ tends to $\infty$ with $t$, we can choose $T$ such that $\alpha(T)=\tau$, and therefore $\gamma(T)=(\ell+T,\eta(\tau))=(\ell+t,\theta)$ lies on the ray from $p$ with direction $\theta$. The path $\gamma$ avoids $B$; indeed, $\gamma(0)=(\ell,u_0)=q$, $\gamma$ is $\la$-Lipschitz because
    \begin{equation*}
        |\dot\gamma(t)|^2=|(1,\alpha'(t)\dot\eta(\alpha(t)))|^2=1+G(\ell+t)^2(\la^2-1)G(\ell+t)^{-2}|\dot\eta(\alpha(t))|^2=\la^2,
    \end{equation*}
    and $d(\gamma(t),p)=\ell+t>t$ for all $t\in[0,T]$, so $\gamma(t)\in M\setminus \mathcal{B}(p,t)\subseteq M\setminus B_t$.
\end{proof}

The condition $I=\infty$ is not necessary. First, observe that if $\ell$ is small, then the competition resembles a Euclidean competition and we cannot expect coexistence, no matter what $G$ is. Second, having $I<\infty$ does not prevent $M$ from containing spheres centered around $p$ with arbitrarily large radius yet arbitrarily small surface area, and if $q$ lies on such a sphere then the set $A$ will conquer the entire sphere quickly, trapping $B$ within the ball it bounds. Thus, at least as long as $p$ remains at the origin, some extra assumption is needed in order to enable coexistence. Motivated by Itai Benjamini's observation \cite{itaisurvival} that coexistence is possible on Gromov-hyperbolic spaces, we add the assumption that $M$ is nonpositively curved, i.e. that all its sectional curvatures are nonpositive.
\begin{ppn}\label{unbounded}
    Suppose that $M$ is spherically symmetric about $p$ and nonpositively curved, and that $I<\infty$. Then there exists $L>0$ such that if $\ell>L$ then $B_\infty$ is unbounded.
\end{ppn}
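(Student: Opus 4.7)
By Corollary~\ref{solcor}, it suffices to show that $\Omega$ is unbounded when $\ell$ is large. The plan is an iterative ``detour'' argument exhibiting points of $\Omega$ on the antipodal ray $\{(r,-u_0):r>0\}$ at arbitrarily large distances from $p$. The starting point is that $\mathcal{B}(p,R_0)\subseteq\Omega_1\subseteq\Omega$ with $R_0 = \ell/(\lambda+1)$: for $d(x,p)<R_0$ the triangle inequality gives $d(x,q)\ge \ell - d(x,p) > \lambda d(x,p)$, so $x\in\Omega_1$ by definition \eqref{omega1}. For $\ell$ large this ``initial obstacle'' around $p$ is large.

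The key detour estimate is the following. Any path $\gamma$ from $q$ to a point $(r,-u_0)$ lying in $M\setminus\Omega$ must avoid $\mathcal{B}(p,R_0)$, so the polar radius $s(t)$ along $\gamma$ satisfies $s\ge R_0$. The nonpositive curvature hypothesis gives $G''=-KG\ge 0$ with $G'(0)=1$, so $G$ is convex and $G'\ge 1$; in particular $G$ is increasing and $G(s)\ge G(R_0)$ along $\gamma$. Writing the arclength in polar coordinates and comparing with a Euclidean plane in coordinates $(s,G(R_0)\phi)$,
\[\mathrm{Length}(\gamma) = \int\sqrt{\dot s^2 + G(s)^2\dot\phi^2}\,dt \ge \sqrt{(r-\ell)^2 + \pi^2 G(R_0)^2},\]
since the angular coordinate $\phi$ must sweep from $u_0$ to $-u_0$, a distance of $\pi$ on $S_pM$. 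This lower bound exceeds $\lambda r$ for $r$ up to some $r_1\sim\pi G(R_0)/\sqrt{\lambda^2-1}$, placing the segment $\{(r,-u_0):0<r\le r_1\}$ inside $\Omega$.

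This base estimate is then bootstrapped. Once $\Omega$ is known to contain a larger ``wedge'' $W_n$ around the antipodal direction extending out to radius $R_n$, the final portion of any angular sweep toward $-u_0$ is forced to leave this wedge by rising to polar radius $\ge R_n$, improving the length bound by replacing $G(R_0)$ with $G(R_n)$ on that portion. One then obtains a recurrence of the form $R_{n+1}\sim\pi G(R_n)/\sqrt{\lambda^2-1}$. Since $G$ is convex with $G(0)=0$, the ratio $G(r)/r$ is nondecreasing; combined with $I=\int_1^\infty dr/G<\infty$, which rules out $G(r)/r$ being bounded (else the integral would diverge), we get $G(r)/r\to\infty$. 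Hence the recurrence diverges, $R_n\to\infty$, and taking $L$ so that $\ell>L$ makes $R_0$ large enough to initiate the iteration (so that already $R_1>R_0$), giving $\Omega$ unbounded.

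The main obstacle is the careful bookkeeping of the wedges $W_n$ in the induction step: at each stage one must verify that $W_n$ has enough angular width at each radius to force the final portion of the sweep to high polar radii, and that the wedge $W_{n+1}$ produced by the improved length bound inherits this property. Both ingredients --- convexity of $G$ from nonpositive curvature, and the superlinear growth of $G$ from $I<\infty$ --- are used essentially: the former keeps the angular geometry nondegenerate and makes the Euclidean comparison work, while the latter drives the recurrence past every bound.
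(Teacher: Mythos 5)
Your iterative wedge-growing approach is genuinely different from the paper's one-shot argument, but as stated it has a concrete gap in the bootstrap step, which you yourself flag as the ``main obstacle.''

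The paper's proof takes a minimizing path $\gamma$ in $M\setminus\Omega$ from $q$ to a finite point $(f(\theta),\theta)\in\partial\Omega$ and shows, via a convexity argument, that $\gamma$ consists of a geodesic segment followed by a boundary-hugging segment along $\partial\Omega$ whose radial coordinate increases at unit rate, so its angular speed is exactly $\sqrt{\lambda^2-1}/G$. The starting angle $\theta_0$ of the boundary-hugging segment is then pinned down by Lemma~\ref{visibilitylma} and Corollary~\ref{visibilitycor}: $\angle(\theta_0,u_0)\le a<\pi-c_0$ with $c_0$ depending only on $\lambda$, which uses the nonpositive-curvature triangle angle sum. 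Finiteness of the tail of $I$ then gives $\angle(\theta,u_0)<\pi$ in one shot. Your argument replaces the visibility cone by starting from $\mathcal{B}(p,R_0)\subseteq\Omega_1$ and iterating, which is an honest alternative route. Your base step is correct, and your observation that convexity of $G$ plus $I<\infty$ forces $G(r)/r\to\infty$ is both true and useful (the paper does not need it explicitly, but it is the right mechanism to drive a recurrence).

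The gap: the recurrence $R_{n+1}\sim\pi G(R_n)/\sqrt{\lambda^2-1}$ is too optimistic. A path from $q$ to $(r,-u_0)$ avoiding $W_n$ does \emph{not} have to stay at radius $\ge R_n$ throughout its angular sweep of $\pi$; the wedge $W_n$ tapers from full radius $R_n$ near $\phi=\pi$ down to $R_0$ away from the antipodal direction, so the path only needs large radius on the small angular sector near $\phi=\pi$ where $W_n$ is tall. The honest length bound, writing $W_n=\{(s,\phi):s<f_n(\phi)\}$ (with $\phi:=\pi-\angle(\theta,u_0)$), is of the form $\mathrm{Length}\gtrsim\sqrt{(r-\ell)^2+\bigl(\int_0^\pi G(f_n(\psi))\,d\psi\bigr)^2}$, leading to the recurrence $f_{n+1}(\phi)\sim\frac{1}{\sqrt{\lambda^2-1}}\int_\phi^\pi G(f_n(\psi))\,d\psi$, not the pointwise one you wrote. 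Showing $f_n(0)\to\infty$ from this requires a genuine fixed-point/Gronwall argument: the fixed point solves $df^*/G(f^*)=-d\phi/\sqrt{\lambda^2-1}$, and one must show that when $\int_{R_0}^\infty G^{-1}<\pi/\sqrt{\lambda^2-1}$ there is no finite fixed point dominating $f_1$, hence the monotone sequence $f_n$ is unbounded. None of this, nor the justification that one may parametrize by $\phi$ (or handle non-monotone sweeps), nor that $W_n\subseteq\Omega_{n+1}$ inductively, is carried out in your sketch. The paper's structural decomposition of $\gamma$ and the visibility cone replace the whole iteration, and that is why it works in one step.
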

\begin{rmk*}\normalfont
    Note that if the sectional curvature of $M$ is bounded from above by a negative constant then automatically $I<\infty$. In this case one can use the argument from  \cite{itaisurvival} to prove that coexistence is possible.
\end{rmk*}

Recall that $B_\infty=\Omega$ by Proposition \ref{OmegaisB}. Since $M$ is nonpositively curved, any two points are joined by a unique minimizing geodesic. Say that a point $x\in M$ is \textit{visible} if the unique minimizing geodesic joining $q$ and $x$ does not intersect $\Omega$, and that $x$ is \textit{visible$_n$} if this geodesic does not intersect $\Omega_n$.

Since $\Omega$ is open and star-shaped, there is a function $f:S_pM\to(0,\infty]$ such that $\Omega$ is given in polar normal coordinates by the relation
$$\Omega=\{(r,\theta) \mid r<f(\theta)\}$$
(note that $f$ may attain the value $\infty$). 
Similarly, let $f_1$ denote the function corresponding to the star-shaped set $\Omega_1$.

\begin{lma}\label{omega0geolma}
    Suppose that $M$ is spherically symmetric about $p$ and nonpositively curved. A geodesic from $q$ meets $\partial\Omega_1$ at most twice. If it meets $\partial\Omega_1$ at $x_1=(r_1,\theta_1)$ and $x_2=(r_2,\theta_2)$, and $\angle(\theta_1,u_0)<\angle(\theta_2,u_0)$, then $x_1$ is visible$_1$, while $x_2$ is not.
\end{lma}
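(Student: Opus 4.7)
The plan is to reduce the claim to an analysis of the single real function
\[
h(s) := s - \la \rho(s), \qquad \rho(s) := d(\gamma(s), p),
\]
where $\gamma : [0, \infty) \to M$ is the geodesic from $q$ parametrized by arclength. Since $d(\gamma(s), q) = s$, the definition of $\Omega_1$ in \eqref{omega1} translates into $\gamma(s) \in \Omega_1 \iff h(s) > 0$ and $\gamma(s) \in \partial\Omega_1 \iff h(s) = 0$; in particular $h(0) = -\la \ell < 0$. By spherical symmetry, $\gamma$ lies in a $2$-dimensional totally geodesic slice through $p$, so I may assume $M$ is $2$-dimensional for what follows.

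The key observation is that $h$ is concave. Since $M$ is a Hadamard manifold (simply connected and nonpositively curved), the Cartan--Hadamard theorem gives convexity of $\rho$ along $\gamma$, and in fact strict convexity unless $\gamma$ is the radial geodesic through $p$ -- in which case $\rho(s) = |s - \ell|$ and one computes directly that $h$ has exactly two zeros $\la \ell / (\la \pm 1)$. In the non-radial case $h$ is strictly concave, and therefore has at most two zeros in $[0, \infty)$. This proves the first assertion.

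Now suppose $\gamma$ meets $\partial\Omega_1$ at two distinct points at arclength parameters $s_1 < s_2$. Concavity of $h$, combined with $h(0) < 0 = h(s_1) = h(s_2)$, forces $h < 0$ on $[0, s_1)$ and $h > 0$ on $(s_1, s_2)$. Hence the minimizing geodesic $\gamma|_{[0, s_1]}$ from $q$ to $\gamma(s_1)$ stays in $M \setminus \Omega_1$, so $\gamma(s_1)$ is visible$_1$; while the extension $\gamma|_{[0, s_2]}$ enters $\Omega_1$ on the open interval $(s_1, s_2)$, so $\gamma(s_2)$ is not visible$_1$.

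It remains to identify the arclength order along $\gamma$ with the angular order at $p$. In a spherically symmetric $2$-manifold, Clairaut's conservation law $G(\rho(s))^2 \dot\theta(s) \equiv \text{const}$ along geodesics yields monotonicity of $\theta(\gamma(s))$ in $s$, starting from $\theta(\gamma(0)) = u_0$. Together with the fact that a geodesic in a Hadamard manifold subtends angle at most $\pi$ at any external point (by CAT($0$) comparison with the Euclidean plane), this gives monotonicity of the spherical distance $\angle(\theta(\gamma(s)), u_0)$ in $s$. Consequently $s_1 < s_2$ corresponds precisely to $\angle(\theta_1, u_0) < \angle(\theta_2, u_0)$, matching the naming in the statement. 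The main subtle step is this final angular monotonicity, which combines spherical symmetry (for Clairaut's relation) with nonpositive curvature (for the $\pi$-bound on the angular sweep at $p$).
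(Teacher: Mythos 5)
Your proof is correct and follows essentially the same route as the paper: convexity (in the paper's sign convention) of $t \mapsto \la\, d(\gamma(t),p) - d(\gamma(t),q)$ along geodesics in a Hadamard manifold gives at most two intersections with $\partial\Omega_1$ and identifies the visible one as the first, and then monotonicity of the polar angle along the geodesic translates arclength order into angular order. You make explicit (via Clairaut's relation plus the CAT(0) bound on the angular sweep at $p$) a step the paper only asserts, namely that $\angle(\eta(t),u_0)$ is strictly increasing, but this is a matter of detail rather than a different argument.
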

Here $\angle(v,w)=\arccos\left<v,w\right>$ for $v,w\in S_xM$, $x\in M$.
\begin{proof}
    Let $\gamma$ be a unit-speed geodesic emanating from $q$. The function $$h(t):=\la d(\gamma(t),p)-d(\gamma(t),q)=\la d(\gamma(t),p)-1$$ 
    is strictly convex since $M$ is nonpositively curved, unless $\gamma$ passes through $p$, in which case $h(t)=\la |\ell-t|-1$. In both cases, $h$ vanishes at most twice, and is negative between its zeros; hence $\gamma^{-1}(\Omega_1)$ is an interval (possibly empty) and $\gamma^{-1}(\partial\Omega_1)$ consists of at most two points; if it consists of exactly two points $t_1<t_2$, then only $\gamma(t_1)$ is visible$_1$, since $\gamma\vert_{[0,t_2]}$ intersects $\Omega_1$ and $\gamma\vert_{[0,t_1]}$ does not. Let
    $$\gamma(t)=(\rho(t),\eta(t))$$
    be the expression of $\gamma$ in polar coordinates. Since $M$ is spherically symmetric, the curve $\eta$ lies on a great circle in $S_pM$, and $\angle(\eta(t),\eta(0))=\angle(\eta(t),u_0)$ is strictly increasing. Therefore, if $x_1,x_2$ are as in the statement of the lemma, and $t_i$ satisfy $\gamma(t_i)=x_i$,  then $t_1<t_2$, hence $x_1$ is visible$_1$ while $x_2$ is not.
\end{proof}

\begin{lma}\label{visibilitylma}
    Suppose that $M$ is spherically symmetric about $p$ and nonpositively curved. Then there exists $0<a<\pi$ such that $(f_1(\theta),\theta)\in\partial\Omega_1$ is visible$_1$ exactly when $\angle(\theta,u_0)\le a$. Moreover, $a$ is bounded away from $\pi$ in terms of $\la$ alone.
\end{lma}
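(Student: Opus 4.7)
My plan is to parametrize the curve $\partial\Omega_1$ by the angle $\alpha=\angle(\theta,u_0)\in[0,\pi]$, writing $x(\alpha)=(F(\alpha),\theta)$ where $F(\alpha)=f_1(\theta)$ depends only on $\alpha$ by spherical symmetry. The triangle inequality yields $\Omega_1\subseteq\mathcal{B}(p,\ell/(\la-1))$, so $F$ is everywhere finite, with $F(0)=\ell/(\la+1)$ and $F(\pi)=\ell/(\la-1)$. Since $M$ is Cartan-Hadamard (complete, simply-connected, nonpositively curved), unique minimizing geodesics exist and $F$ is continuous by a standard implicit function argument. I will work inside the 2-dimensional totally geodesic slice through $p$, $q$ and a chosen direction, reducing the problem to a planar picture.

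The heart of the argument is a pairing involution. For $\alpha\in[0,\pi]$, let $\gamma_\alpha$ be the full extended geodesic from $q$ passing through $x(\alpha)$. The function $h_\alpha(t)=\la d(\gamma_\alpha(t),p)-|t|$ is convex, tends to $+\infty$ as $|t|\to\infty$, and vanishes at $x(\alpha)$, so by (the argument for) Lemma~\ref{omega0geolma} it has exactly two zeros counted with multiplicity. Define $\iota(\alpha)\in[0,\pi]$ to be the angle at $p$ of the second zero (with $\iota(\alpha)=\alpha$ in the tangent case). Then $\iota$ is a continuous involution of $[0,\pi]$ with $\iota(0)=\pi$, since $\gamma_0$ runs through $p$ and hits $\partial\Omega_1$ at $x(0)$ and $x(\pi)$. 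A continuous injection on an interval is strictly monotone, so $\iota$ is strictly decreasing and has a unique fixed point $a\in(0,\pi)$. By Lemma~\ref{omega0geolma}, $x(\alpha)$ is visible$_1$ iff it is the smaller-angle intersection of its geodesic pair, i.e.\ iff $\alpha\le\iota(\alpha)$; by monotonicity of $\iota$, this is equivalent to $\alpha\le a$.

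To bound $a$, I note that at $\alpha=a$ the geodesic $\gamma_a$ is tangent to $\partial\Omega_1$ at $x(a)$, so $h_a'(t_0)=0$, where $t_0$ is the parameter value of $x(a)$. Since $h_a'(t)=\la\langle\gamma_a'(t),\nabla d(\cdot,p)\rangle-1$, tangency forces $\cos\psi=1/\la$, where $\psi$ is the angle at $x(a)$ between $\gamma_a'(t_0)$ and the outward radial direction from $p$. This $\psi$ is exactly the interior angle at $x(a)$ of the geodesic triangle $\triangle(p,q,x(a))$. The CAT(0) property of $M$ gives that the sum of interior angles of this triangle is at most $\pi$, yielding $a+\arccos(1/\la)\le\pi$, i.e.\ $a\le\pi-\arccos(1/\la)$, a bound depending only on $\la$.

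The main technical obstacle I anticipate is verifying continuity of $\iota$ at $\alpha=0,\pi$, where $\gamma_\alpha$ passes through $p$ and $h_\alpha$ is only piecewise linear rather than strictly convex; at these degenerate values the zeros of $h_\alpha$ can be computed explicitly from $h_0(t)=\la|\ell-t|-t$, whereas for $\alpha\in(0,\pi)$ they depend continuously on $\alpha$ by the implicit function theorem applied to the simple roots of $h_\alpha$.
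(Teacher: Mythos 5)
Your argument is essentially the same as the paper's, and the angle bound at the end is identical: in both cases one shows that the map ``angle at $p$'' versus ``direction at $q$'' along $\partial\Omega_1$ is a double cover with a single branch/tangent point, via Lemma~\ref{omega0geolma}, and then applies the comparison-triangle angle-sum inequality to the tangent point to get $a\le\pi-\arccos(\la^{-1})$. The paper realizes the double-cover structure by mapping $\partial\Omega_1$ (parametrized over a half great circle in $S_pM$) into $S_qM$ via the initial direction $v(t)$ and observing that $\angle(v(t),v_0)$ has a unique maximum; you encode exactly the same combinatorics as a continuous monotone involution $\iota$ of $[0,\pi]$ with $\iota(0)=\pi$. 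This is a cosmetic difference, not a different route.

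One genuine imprecision worth fixing: you introduce the \emph{full extended} geodesic $\gamma_\alpha$ through $q$ and claim $h_\alpha(t)=\la\,d(\gamma_\alpha(t),p)-|t|$ is convex. On the full line it is not: $\la\,d(\gamma_\alpha(t),p)$ is convex, but $-|t|$ has a concave corner at $t=0$ (the one-sided derivatives jump by $-2$), and in fact nothing a priori forbids additional zeros of $h_\alpha$ on the backward ray. The statement you actually need, and the one Lemma~\ref{omega0geolma} supplies, is for the forward ray only: $h_\alpha(t)=\la\,d(\gamma_\alpha(t),p)-t$ on $[0,\infty)$ is convex, equals $\la\ell>0$ at $t=0$, tends to $+\infty$, and vanishes at the parameter of $x(\alpha)$, so it has exactly two zeros (with multiplicity), and both lie in $(0,\infty)$. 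Restricting to the ray makes $\iota$ well defined and your involution argument goes through. (Your explicit formula $h_0(t)=\la|\ell-t|-t$ is likewise the forward-ray version; on the full line the second term should be $|t|$.) I would also spell out continuity of $\iota$ at the fixed point $a$: there the two zeros of $h_\alpha$ coalesce, so the implicit-function-theorem justification for simple roots does not apply directly, but the convexity of $h_\alpha$ and the fact that $h_\alpha$ must vanish at $x(\alpha)$ force both zeros to converge to the double zero as $\alpha\to a$, which gives continuity.
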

\begin{figure}
    \centering
    \includegraphics[width=.6\textwidth]{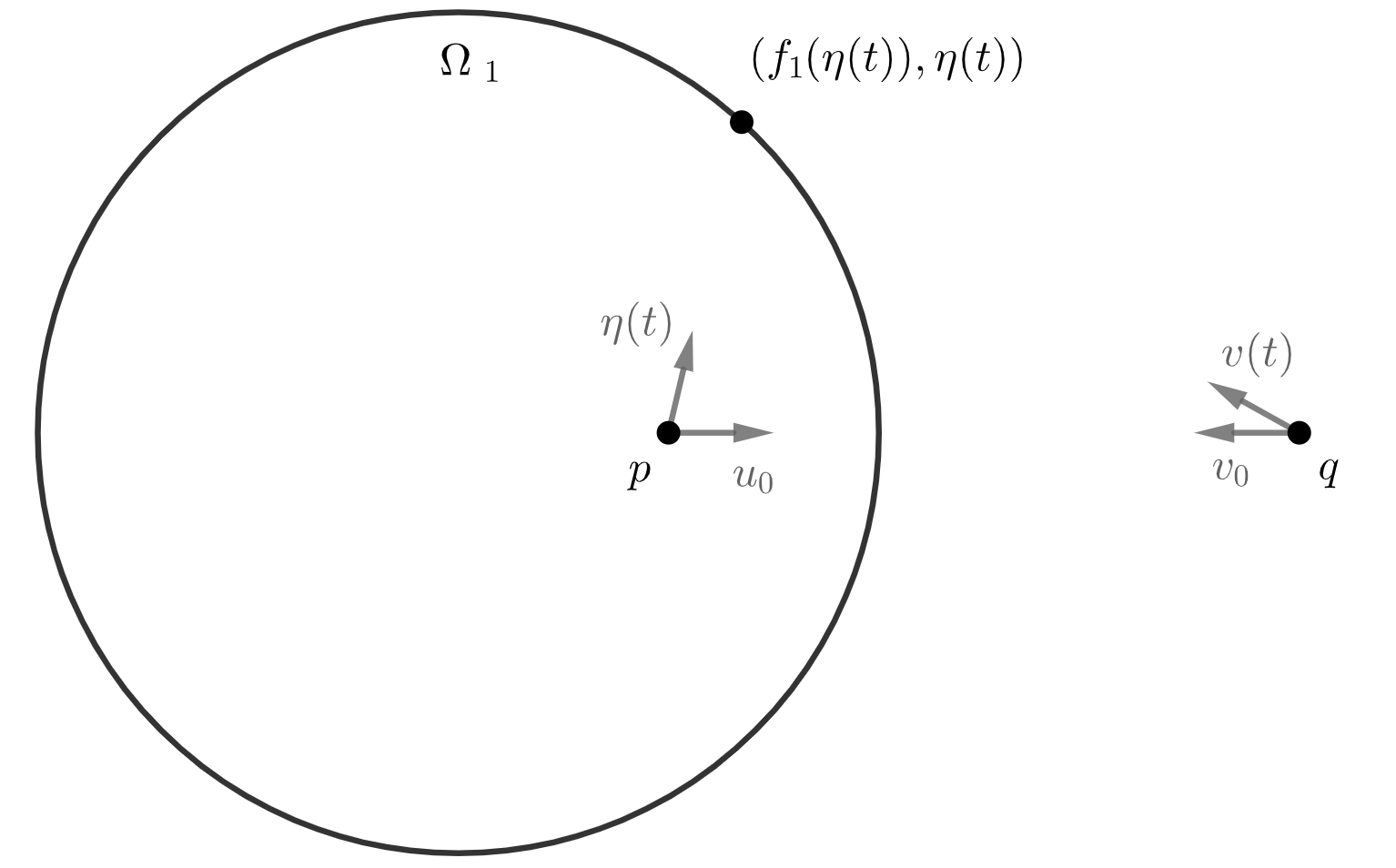}
    \caption{Proof of Lemma \ref{visibilitylma}.}
    \label{visibilityfig}
\end{figure}

\begin{proof}
    Identifying $M$ with the tangent space to $M$ at $p$, the set $\Omega_1$ is invariant under orthogonal transformations fixing the line through $p$ and $q$. By this symmetry, it suffices to consider a unit-speed great circle $\eta:[0,T]\to S_pM$ joining $-u_0$ to $u_0$ and to show that there exists some $t_0\in[0,T]$ such that  $(f_1(\eta(t)),\eta(t))$ is visible$_1$ exactly when $t\ge t_0$, and that $\angle(\eta(t_0),u_0)\le c(\la)<\pi$.
    
    For every $t\in[0,T]$, let $v(t)\in S_qM$ be the initial velocity of the unit-speed geodesic joining $q$ to $(f_1(\eta(t)),\eta(t))$. As $t$ varies, $v$ traces an arc in $S_qM$, which, by symmetry, is contained in a great circle in $S_qM$. If $v_0$ is the initial velocity of the unit-speed geodesic from $q$ to $p$, then $v(0)=v(T)=v_0$ (see Figure \ref{visibilityfig}).
    
    By Lemma \ref{omega0geolma}, the path $v$ may visit each point at most twice, so since $v$ lies on a great circle and its endpoints coincide, the function $\angle(v(t),v_0)$ attains a unique maximum, say at $t=t_0$, and every value except $v(t_0)$ must therefore be attained by $v$ exactly twice, once in $[0,t_0)$ and again in $(t_0,T]$. Since $\angle(\eta(t),u_0)$ is decreasing in $t$, it follows from Lemma \ref{omega0geolma} that  $(f(\eta(t),\eta(t))$ is visible$_1$ exactly when $t\ge t_0$.\\
    Set $\theta_0:=\eta(t_0)$ and $\alpha_0:=\angle(\eta(t_0),u_0)$. Let $\gamma:[0,S]\to M$ be a unit speed geodesic joining $q$ to $x_0:=(f_1(\theta_0),\theta_0)$. Since $\gamma$ intersects $\Omega_1$ only at $x_0$, it is tangent to $\partial\Omega_1$ at $x_0$, so by the definition of $\Omega_1$,
    $$0=\frac{d}{dt}\Big\vert_{t=S}\Big(\la d(\gamma(t),p)-d(\gamma(t),q)\Big)=\la \left<\dot\gamma(S),\partial/
    \partial_r\right>-1,$$
    whence $\alpha_1:=\angle(\dot\gamma(S),\partial/\partial_r)=\arccos(\la^{-1})$. Now, both $\alpha_0$ and $\alpha_1$ are angles in the triangle $\triangle p q x_0$, so since $M$ is nonpositively curved, $\alpha_0\le \pi-\arccos(\la^{-1})$. This finishes the proof.
\end{proof}

\begin{cor}\label{visibilitycor}
    Under the assumptions and notations of the previous lemma, a point $x=(f(\theta),\theta)\in\partial\Omega$ is visible if and only if  $\angle(\theta,u_0)\le a$, if and only if $x\in\partial\Omega_1$.
\end{cor}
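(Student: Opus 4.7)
The plan is to establish the chain of equivalences (visible) $\iff$ ($x\in\partial\Omega_1$) $\iff$ ($\angle(\theta,u_0)\le a$). The tools in hand are Lemma \ref{Omegaprop0}, which gives $d_\Omega(x,q)=\lambda d(x,p)$ on $\partial\Omega$, and Lemma \ref{visibilitylma}, which characterizes visible$_1$ points of $\partial\Omega_1$ by the angle bound. Throughout, I use that $M$ is a simply connected Hadamard manifold, so the minimizing geodesic between any two points is unique and remains minimizing on every subinterval.

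First I would show (visible) $\iff$ ($x\in\partial\Omega_1$). For the forward direction, if $x\in\partial\Omega$ is visible then the minimizing geodesic from $q$ to $x$ has length $d(x,q)$ and lies in $M\setminus\Omega$, so $d_\Omega(x,q)\le d(x,q)\le d_\Omega(x,q)=\lambda d(x,p)$; equality throughout places $x$ on $\partial\Omega_1=\{y:d(y,q)=\lambda d(y,p)\}$. For the reverse, the proof of Lemma \ref{Omegaprop0} yields an $\Omega$-avoiding path from $q$ to $x$ of length $d_\Omega(x,q)=\lambda d(x,p)=d(x,q)$; by uniqueness of the minimizing geodesic, this path must coincide with the geodesic itself, which is therefore visible. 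Since $x\in\partial\Omega\cap\partial\Omega_1$ forces $f(\theta)=f_1(\theta)$, combining (visible) $\iff$ ($x\in\partial\Omega_1$) with the forward direction of Lemma \ref{visibilitylma} immediately gives (visible) $\Rightarrow$ ($\angle(\theta,u_0)\le a$).

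The main obstacle is the converse: showing that $\angle(\theta,u_0)\le a$ forces $x$ to lie on $\partial\Omega_1$. By Lemma \ref{visibilitylma}, the point $(f_1(\theta),\theta)$ is visible$_1$, but what I need is visibility with respect to the full $\Omega$, not just $\Omega_1$. My plan is to prove by induction on $n$ that the minimizing geodesic $\gamma:[0,T]\to M$ from $q$ to $(f_1(\theta),\theta)$ avoids every $\Omega_n$. Assuming $\gamma$ avoids $\Omega_n$ with $n\ge 1$, suppose for contradiction that $\gamma(t_0)\in\Omega_{n+1}\setminus\Omega_n$. By the definition of $\Omega_{n+1}$, one has $d_{\Omega_n}(\gamma(t_0),q)>\lambda d(\gamma(t_0),p)$. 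On the other hand, $\gamma|_{[0,t_0]}$ is a path in $M\setminus\Omega_n$ of length $t_0$, so $d_{\Omega_n}(\gamma(t_0),q)\le t_0$; and since $\gamma(t_0)\notin\Omega_1$ (as $\Omega_1\subseteq\Omega_n$) and $\gamma$ is minimizing, $t_0=d(\gamma(t_0),q)\le\lambda d(\gamma(t_0),p)$. The two bounds on $d_{\Omega_n}(\gamma(t_0),q)$ collide, giving a contradiction. Hence $\gamma$ misses $\Omega=\bigcup_n\Omega_n$, so $(f_1(\theta),\theta)\notin\Omega$; together with $(r,\theta)\in\Omega_1\subseteq\Omega$ for $r<f_1(\theta)$, this places $(f_1(\theta),\theta)$ on $\partial\Omega$, forcing $f(\theta)=f_1(\theta)$ and $x\in\partial\Omega_1$, which closes the loop.
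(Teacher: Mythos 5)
Your proof is correct and takes essentially the same route as the paper's: the forward direction via Lemma \ref{Omegaprop0} and the fact that visibility forces $d_\Omega(x,q)=d(x,q)$, the reverse direction via $d(x,q)=\lambda d(x,p)=d_\Omega(x,q)$ together with uniqueness of geodesics in a Hadamard manifold, and the hard implication ($\angle(\theta,u_0)\le a \Rightarrow x\in\partial\Omega_1$) by the identical induction showing the geodesic from $q$ to $(f_1(\theta),\theta)$ avoids every $\Omega_n$ using $d_{\Omega_n}(\gamma(t),q)\le t = d(\gamma(t),q)\le \lambda d(\gamma(t),p)$. You merely organize the three equivalences in a slightly different order and phrase the inductive step by contradiction rather than directly, but the mathematical content coincides with the paper's proof.
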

\begin{proof}
    Suppose that $x$ is visible. Then $d_\Omega(x,q)=d(x,q)$, so by Lemma \ref{Omegaprop0}, $d(x,q)=\la d(x,p)$ whence $x\in\partial\Omega_1$. Since $x$ is visible and $\Omega_1\subseteq\Omega$, $x$ is visible$_1$, and therefore by Lemma \ref{visibilitylma}, $\angle(\theta,u_0)\le a$.
    
    Suppose that $\angle(\theta,u_0)\le a$, and let $x_1:=(f_1(\theta),\theta)\in\partial\Omega_1$. Then by Lemma \ref{visibilitylma}, $x_1$ is visible$_1$. We argue by induction that $x_1\in\partial\Omega_n$ and $x_1$ is visible$_n$ for all $n\in\N$.
    
    Indeed, suppose that $x_1\in\partial\Omega_n$ and $x_1$ is visible$_n$. Then $d_{\Omega_n}(x_1,q)=d(x_1,q)=\la d(x_1,p)$ so  $x\in\partial\Omega_{n+1}$. If $\gamma$ is the unit-speed geodesic from $q$ to $x_1$, then since $\gamma$ does not intersect $\Omega_n$ nor $\Omega_1$, $$d_{\Omega_n}(\gamma(t),q)=d(\gamma(t),q)\le \la d(\gamma(t),p)$$ so $\gamma(t)\notin\Omega_{n+1}$ for all $t$. Hence $x_1$ is visible$_{n+1}$. This completes the induction step. Thus $x_1\in\partial\Omega_n$ and $x_1$ is visible$_n$ for all $n\in\N$, whence $x_1\in\partial\Omega$. It follows that $x = x_1$, because $x$ and $x_1$ share the same $\theta$-coordinate and both lie on $\partial\Omega$. In particular, $x \in \partial\Omega_1$.
    
    Finally, if $x\in\partial\Omega_1$ then $d(x,q)=\la d(x,p)=d_\Omega(x,q)$ by Lemma \ref{Omegaprop0}, so $x$ is visible.
\end{proof}

\begin{proof}[Proof of Proposition \ref{unbounded}]
    Let $\theta \in S_pM$ satisfy $f(\theta) < \infty$, i.e. $$x: =(f(\theta),\theta)\in\partial \Omega.$$ We will show that
    \begin{equation}\label{sn-1distancegoal}
        \angle(\theta,u_0)<\pi,
    \end{equation} provided that $\ell$ is larger that some $L>0$ independent of $\theta$. This proves that $f(-u_0) = \infty$, whence $\Omega(=B_\infty)$ contains the ray $\{\theta=-u_0\}$ and is therefore unbounded.
    
    Let $\gamma:[0,T]\to M\setminus \Omega$ be a minimal path joining $q$ and $x$ in $M\setminus\Omega$, parametrized by speed $\la$, so that
    $$d_\Omega(\gamma(t),q)=\la t\qquad \text{ for all } t\in[0,T].$$
    We now prove that there is some $t_0>0$ such that $\gamma\vert_{[0,t_0]}$ is a geodesic, while $\gamma(t)\in\partial \Omega$ for all $t\ge t_0$. On each interval of $\gamma^{-1}(M\setminus\ol{\Omega})$, $\gamma$ must be a geodesic. But there cannot be such an interval both of whose endpoints lie in $\gamma^{-1}(\partial\Omega)$, because the function $$\la d(\gamma(t),p)-d_\Omega(\gamma(t),q)=\la d(\gamma(t),p)-\la t$$ is convex on such an interval (since $\gamma$ is a geodesic and $M$ is nonpositively curved), and by Lemma \ref{Omegaprop0} it vanishes on both endpoints of the interval and is positive in its interior, a contradiction. Therefore, there is some $t_0>0$ such that $\gamma\vert_{[0,t_0]}$ is a geodesic, while $\gamma(t)\in\partial \Omega$ for all $t\ge t_0$. By Corollary \ref{visibilitycor}, if we write in polar coordinates $$\gamma(t_0)=(r_0,\theta_0)=(f(\theta_0),\theta_0),$$ then there exists some $0<c_0<\pi$, depending only on $\la$, such that
    \begin{equation}\label{theta01dist}
        \angle(\theta_0,u_0)\le a<\pi-c_0.
    \end{equation} 
    Lemma \ref{Omegaprop0} implies that for $t\ge t_0$, the curve $\gamma$ satisfies
    $$d(\gamma(t),p)=\la^{-1}d_\Omega(\gamma(t),q)=t,$$
    and since $\gamma$ is speed-$\la$, it follows that in polar coordinates, it takes the form
    \begin{equation*}
        \gamma(t)=(r_0+(t-t_0),\eta(t)) \qquad t_0\le t\le T
    \end{equation*}
    where $\eta:[t_0,T]\to S_pM$ is a Lipschitz path satisfying
    \begin{equation*}
        |\dot\eta(t)|=\sqrt{\la^2-1}/G(r_0+(t-t_0))
    \end{equation*}
    (this derivative being defined almost everywhere). Therefore
    \begin{equation}\label{etadistbound}
        \angle(\eta(T),\eta(t_0))\le \int_{t_0}^T|\dot\eta(s)|ds=\sqrt{\la^2-1}\int_{t_0}^T\frac{1}{G(r_0+(s-t_0))}ds.
    \end{equation}
    Now, it follows from the triangle inequality and the definition \eqref{omega1} of $\Omega_1$ that 
    \begin{equation*}
        \mathcal{B}(p,\ell/(\la+1))\subseteq\Omega_1,
    \end{equation*}
    and therefore $r_0$ is bounded from below by $\ell/(\la+1)$. By the assumption $I<\infty$, if $\ell$ is large enough, the right hand side of \eqref{etadistbound} is strictly smaller than $c_0$, so by \eqref{theta01dist} and \eqref{etadistbound},
    \begin{equation*}
        \angle(\theta,u_0)\le \angle(\theta,\theta_0)+\angle(\theta_0,u_0)= \angle(\eta(T),\eta(t_0))+\angle(\theta_0,u_0)< \pi
    \end{equation*}
    as desired.
\end{proof}

In the case $\mathrm{dim}M=2$, the surface $M$ is either hyperbolic or parabolic (since it is rotationally-symmetric and non-compact). In the former case $I<\infty$, and in the latter $I=\infty$, see Milnor \cite{milnor}. Theorem \ref{mainthm} now follows.
\begin{rmk*}\normalfont
The proof of Proposition \ref{unbounded} enables us to identify the shape of $\Omega$ in the spherically symmetric, nonpositively curved case (both when $I=\infty$ and when $I<\infty$). Suppose for simplicity that $\mathrm{dim}M=2$, in which case, by passing to normal coordinates, we may assume that $M=(\R^2,g)$ for some rotationally symmetric metric $g$, and $p=(0,0)$. There exists $0\le a<\pi$ such that $(f(\theta)\cos\theta,f(\theta)\sin\theta)\in\partial\Omega$ is visible exactly when $-a\le \theta\le a$, and $\partial\Omega\cap\{|\theta|\le a\}$ coincides with $\partial\Omega_1$. The remainder $\partial\Omega\cap\{a\le|\theta|\le\pi\}$ is the union of two speed-$\la$ curves whose distance from the origin $p$ increases at rate $1$. In particular, if $M$ is the Euclidean plane, then  $a=\pi/2$, $\partial\Omega\cap\{x\ge0\}=\partial\Omega_1\cap\{x\ge0\}$ is a circular arc, and $\partial\Omega\cap\{x\le 0\}$ is the union of two logarithmic spirals (see Figure \ref{ABfig}). If $M$ is the unit disc with the Poincar{\'e} metric, then the shape $\Omega_1$ is not a circle, but the two curves are still logarithmic spirals.
\end{rmk*}

\nocite{*}

\end{document}